\documentclass[11pt]{amsart}
\usepackage{amsmath}
\usepackage{amssymb}
\usepackage{amsthm}
\usepackage{bbm}
\usepackage{fullpage}
\usepackage{mathrsfs}
\usepackage{tikz-cd}
\usepackage{arydshln}
\usepackage{graphicx}
\usepackage{pdflscape}
\usepackage{multicol}
\usepackage{multirow}
\usepackage{mathtools}
\usepackage{hyperref}
\usepackage{enumitem}
\usepackage[toc,page]{appendix}
\usepackage[english]{babel}
\usepackage{setspace}
\usepackage{marvosym}
\allowdisplaybreaks[1]

\DeclareMathOperator{\Spn}{Span}

\DeclareMathOperator{\End}{End}

\DeclareMathOperator{\diag}{diag}

\newcommand{\ZeeII}{\mathbb{Z}_{2}}

\newcommand*{\Trn}{^{\mkern-2mu\mathsf{t}}}
\newcommand*{\sTrn}{^{\mkern-2mu\mathsf{st}}}

\newcommand{\gl}{\mathfrak{gl}}
\newcommand{\slx}{\mathfrak{sl}}
\newcommand{\Imn}{\mathbb{I}_{m|n}}

\newcommand{\gk}{\mathfrak{g}}

\newcommand{\hk}{\mathfrak{h}}

\newcommand{\Pk}{\mathfrak{P}}

\newcommand{\R}{\mathbb{R}}
\newcommand{\C}{\mathbb{C}}
\newcommand{\N}{\mathbb{N}}
\newcommand{\Z}{\mathbb{Z}}
\newcommand{\K}{\mathrm{K}}

\newcommand{\ev}{_{\overline{0}}}
\newcommand{\od}{_{\overline{1}}}

\newcommand{\Pt}{\tilde{P}}
\newcommand{\Qt}{\tilde{Q}}
\newcommand{\pt}{\tilde{p}}
\newcommand{\qt}{\tilde{q}}
\newcommand{\xt}{{\tilde{x}}}
\newcommand{\xit}{{\tilde{\xi}}}
\newcommand{\eps}{{\epsilon}}
\newcommand{\epst}{{\tilde{\epsilon}}}
\newcommand{\etat}{{\tilde{\eta}}}
\newcommand{\alphat}{{\tilde{\alpha}}}
\newcommand{\hkt}{\tilde{\hk}}
\newcommand{\thetat}{\tilde{\theta}}
\newcommand{\kappat}{\tilde{\kappa}}
\newcommand{\oGr}{\widetilde{Gr}}
\newcommand{\cW}{\mathcal{W}}
\newcommand{\cWR}{\mathcal{W}_{\mathbb{R}}}
\newcommand{\kP}{\mathcal{P}}
\newcommand{\A}{\mathbb{A}_{m|n}}

\newcommand{\commentout}[1]{}

\newcommand{\eqn}[1]{Eq.~(#1)}

\theoremstyle{remark}

\theoremstyle{plain}
\newtheorem{thm}{Theorem}[section]
\newtheorem*{thm*}{Theorem}
\newtheorem{main}[thm]{Theorem}
\newtheorem{prop}[thm]{Proposition}%[section]
\newtheorem*{prop*}{Proposition}
\newtheorem{lem}[thm]{Lemma}
\newtheorem*{lem*}{Lemma}

\theoremstyle{definition}

\newtheorem{rmk}[thm]{Remark}
\newtheorem{cor}[thm]{Corollary}

\usepackage{fancyhdr}
\title{Super Krawtchouk Polynomials via Lie Superalgebras}

\author{Plamen ILIEV}
\address{School of Mathematics, Georgia Institute of Technology\\
Atlanta, GA, 30332, USA}
\email{iliev@math.gatech.edu}
\thanks{P.~Iliev gratefully acknowledges support by a grant from the Simons Foundation.}

\author{Songhao ZHU}
\address{School of Mathematics, Georgia Institute of Technology\\
Atlanta, GA, 30332, USA}
\email{zhu.math@gatech.edu}

\date{\today}

\begin{document}

\begin{abstract}
    Multivariate extensions of the Krawtchouk polynomials have been studied by numerous authors in recent decades by exploring new connections to probability, representation theory and quantum integrability. We develop a theory of multivariate super Krawtchouk polynomials using the representation theory of the general linear Lie superalgebra, extending results of the first author in the classical setting. Specifically, in the present work we generalize the classical Krawtchouk polynomials, prove their orthogonality, construct certain recurrence relations, and discuss their connections with zonal spherical functions arising from a fermionic Fock-space framework in quantum mechanics.
\end{abstract}

\maketitle
%\tableofcontents

\section{Introduction}

Krawtchouk polynomials are a remarkable family of discrete orthogonal polynomials first introduced by Krawtchouk in 1929 and have attracted attention in various fields for almost a century since they were conceived. 
These polynomials are orthogonal with respect to the binomial distribution, and appear on the level of ${}_2F_1$ in the Askey scheme of hypergeometric polynomials \cite{KLS}.
In 1971, Griffiths \cite{G71} studied their multivariate analogs with respect to the multinomial distribution, largely in the context of probability theory. During the last 25 years these polynomials have appeared in different branches of pure and applied mathematics and were related to character algebras and Gelfand--Aomoto hypergeometric functions \cite{MT04}, representations of Lie algebras \cite{I12} and Lie groups \cite{GVZ13}, Markov chains and statistical modeling \cite{DG14}, quantum integrable systems \cite{IX20} and Gaudin models \cite{I24}.

In particular, the first author provided a Lie theoretic interpretation of these multivariate Krawtchouk polynomials in \cite{I12} proving that these polynomials can be constructed as the coefficients of transition matrices between two weight bases of certain $\slx$-modules. This approach was used to obtain a proof of the orthogonality relations and to derive new recurrence relations parametrized by Cartan subalgebras of the special linear Lie algebra.
A natural question is whether there is a family of similarly orthogonal polynomials in the super setting, and if so, does a comparable Lie theoretic interpretation exist? 

In this paper, we give positive answers to these questions.
Our main objects, super Krawtchouk polynomials $\kP$, are defined using a certain generating function in commuting variables and anticommuting variables in \eqn{\ref{eqn:mxPower}}. 
The parameters depend on tuples $\K = (p, \pt, U)$ and $\Lambda = (q, \qt, V)$ respectively for the commuting (even) part and the anticommuting (odd) part. %Both tuples are motivated by the original probability context definition in \cite{G71}.
The polynomials $\kP = \kP(\alpha, \alphat, \eps, \epst; \K, \Lambda, D )$ can be viewed as polynomials in $(m+n)$ independent variables $(\alphat_1, \dots ,\alphat_m; \epst_1, \dots ,\epst_n)$, parameterized by $(m+n)$ degree indices $(\alpha_1, \dots ,\alpha_m; \eps_1, \dots ,\eps_n)$, which can be placed on an equal footing. 
%In particular, for the anticommuting part, we give a determinant-type formula reflecting such symmetry.

To generalize \cite{I12}, we work with the general linear Lie superalgebra $\gk := \gl(m+1|n+1)$ and its two Cartan subalgebras $\hk$ and $\hkt$ introduced in Section~\ref{sec:def}.
We study the natural degree $D$ superpolynomial $\gk$-module $\Pk^D$ in commuting and anticommuting variables. 
With our initial data $\K$ and $\Lambda$, we define two monomial bases $\{x^\beta\xi^\eta\}$ and $\{\xt^\beta\xit^\eta\}$ of $\Pk^D$ which are shown to be $\hk$- and $\hkt$-weight bases. 

Our first main result, Theorem~\ref{thm:A} in Section~\ref{sec:thms}, says that the super Krawtchouk polynomials $\kP$ arise as the coefficients in the transition matrices between the two weight bases $\{x^\beta\xi^\eta\}$ and $\{\xt^\beta\xit^\eta\}$, thereby offering a superization of the first author's result.

Theorem~\ref{thm:B} establishes the orthogonality in our super setting. The proof requires some preparation. 
One key point is that our bilinear form $\langle -,- \rangle$, defined to be diagonal in $\{x^\beta\xi^\eta\}$ (\eqn{\ref{eqn:defProd}}), is in fact diagonal in $\{\xt^\beta\xit^\eta\}$ as well; this is the content of Proposition~\ref{prop:tForm}. 
To show this, we prove that $\langle -,- \rangle$ is contravariant with respect to the $\gk$-action on $\Pk^D$ in the sense of Proposition~\ref{prop:sign}. We note that the form is not supersymmetric in the usual sense; rather, we introduce a multi-color $\ZeeII^{n+1}$-grading on both $\gk$ and $\Pk^D$. Lemma~\ref{lem:colorSign} states that the multi-color grading is consistent with the parity $\ZeeII$-grading, and can be viewed as a refinement of it. 
A direct computation involving a certain antiautomorphism $\varphi$ of $\gk$ then establishes Proposition~\ref{prop:tForm}. This antiautomorphism is defined using the supertranspose.
In Section~\ref{sec:thms}, we discuss recurrence relations of the purely odd part of $\kP$, denoted by $\kP\od$. These are given in Proposition~\ref{prop:rec} and present an interpretation of these relations in $\bigwedge^{d}[\xi_j]$ using commuting differential operators therein.

The odd part of the theory has a different flavor from its classical counterpart. A main outcome of Section~\ref{sec:zonal} is a new interpretation of the odd Krawtchouk polynomials $\kP\od$ in terms of zonal spherical functions and fermionic quantum states. 
Specifically, \eqn{\ref{eqn:krZonal}} shows that $\kP\od$ is given by $q$- and $\qt$-weighted evaluations of the zonal spherical function $\phi_D$ on the oriented Grassmannian $\oGr(n+1,D)$, arising naturally from the fermionic Fock space picture. 
Combined with a probabilistic interpretation in \eqn{\ref{eqn:prob}}, which arises from a quantum determinantal point process \cite{Gottlieb07}, it shows that the squared norms of these evaluations encode the transition probabilities between the occupation bases determined by $\Lambda=(q,\qt,V)$.

The paper is structured as follows. 
%We outline our paper as follows.
In Section~\ref{sec:def}, we give the definitions of the super Krawtchouk polynomials and introduce the general linear superalgebra. 
In Section~\ref{sec:rep} we define the superpolynomial $\gk$-modules $\Pk^D$ and the bilinear form, and show its contravariance. 
%In Section~\ref{sec:thms} we derive the orthogonality with the help of the previous results and discuss recurrence relation for the purely odd Krawtchouk polynomials. 
In Section~\ref{sec:thms} we derive our main results and discuss recurrence relations for the odd Krawtchouk polynomials.
Finally in Section~\ref{sec:zonal} we relate the super Krawtchouk polynomials to the spherical functions in the setting of quantum mechanics. 
%Section~\ref{sec:thms} and Section~\ref{sec:zonal} are independent. 

\section{Definitions} \label{sec:def}
In this section, we define the Krawtchouk polynomials and present the Lie superalgebra framework. 
Throughout the paper we set $\N = \Z_{\geq 0}$ and denote the set $\{0, 1, \dots, k\}$ by $[k]$ for $k\in \N$. We also write $\ZeeII:= \{0,1\}$.
For $(k+1)$-tuples $\alpha = (\alpha_0, \alpha_1, \dots, \alpha_k)$, $\beta = (\beta_0, \beta_1, \dots, \beta_k)$, and an index set $I = \{i_0, i_1, \dots, i_d\}$ we define
\[
|\alpha| := \sum_{i=0}^k \alpha_i, \quad \alpha!:= \prod_{i=0}^k\alpha_i! , \quad \alpha^\beta := \prod_{i=0}^k \alpha_i^{\beta_i}  , \quad  \alpha_I := \prod_{j=0}^d\alpha_{i_j}.
\]

\subsection{Krawtchouk polynomials} \label{subsec:KP}
We begin with the classical even case, and introduce the odd and mixed cases afterward. 
Griffiths’ construction of the Krawtchouk polynomials is based on a generating function. %For our purposes, we give the following adaptation of the generating function definition from \cite{I12}. 
We give the following definition adapted from \cite{I12}.

We first introduce some data. Let $\K := (p, \pt, U)$ be a tuple consisting of two scalar tuples $p = (p_0, p_1, \dots, p_m), \pt = (\pt_0,\pt_1, \dots, \pt_m)\in \C^{m+1}$ and a complex $(m+1)\times (m+1)$ matrix $U$ such that 
\begin{enumerate}[label=(\roman*)]
    \item $p_0 = \pt_0 \neq 0$;
    \item $U = (u_{i,j})_{i,j=0}^m$ satisfies the normalization condition $u_{0,j}=u_{i,0}=1$ for $i, j \in [m]$;
    \item we have the following matrix equation
    \begin{equation} \label{eqn:PU}
        PU\Pt U\Trn = p_0 I_{m+1}
    \end{equation}
    where $P = \diag(p)$, $\Pt = \diag(\pt)$.
\end{enumerate}
As an immediate consequence, we have $|p| = |\pt| =1$.
%The probabilities mentioned in the introduction are thus special cases of such tuples. \underCons

Let $\{x_i: i\in [m]\}$ be $m+1$ commuting indeterminates. Let $D \in \N$ be a fixed degree. Let $\alpha = (\alpha_0, \alpha_1,\dots,\alpha_m), \alphat= (\alphat_0, \alphat_1,\dots,\alphat_m) \in \N^{m+1}$ be such that $|\alpha| = |\alphat| = D$. The even Krawtchouk polynomials $\kP\ev(\alpha, \alphat) = \kP\ev(\alpha, \alphat; \K, D)$ are defined as follows:
\begin{equation} \label{eqn:evenPower}
    \prod_{i=0}^m\left(\sum_{j=0}^m u_{i,j} x_j\right)^{\alphat_i}
=
\sum_{\alpha}
\frac{D!}{\alpha!}\kP\ev(\alpha,\alphat )
x^{\alpha}.
 \end{equation}
%Substituting $\alphat_0 = D -\sum_{i\geq1}\alphat_i$ and $\alpha_0 = D -\sum_{i\geq 1}\alpha_i$, we may regard $\{\alpha_1 , \dots, \alpha_m, \alphat_1, \dots, \alphat_m \}$ as $2m$ independent variables. \underCons 
Substituting $\alphat_0 = D -\sum_{i\geq 1}\alphat_i$, we may regard $\{\alphat_1, \dots, \alphat_m \}$ as $m$ independent variables. 
%A priori $\kP\ev$ are functions of $\alphat$ parameterized by the degree parameters $\alpha$, but they turn out to be polynomials. 
In \cite{MT04}, an explicit formula for these polynomials is given, and the roles of $\alpha$ and $\alphat$ are completely symmetric; see \cite[Eq.~(1.3)]{I12}. 
To obtain the formulation in \cite{I12}, simply put $x_0 = 1$ together with a straightforward relabeling. 

The odd analog is given similarly. Let $\Lambda := (q, \qt, V)$ be another tuple as above where $q, \qt\in \C^{n+1}$ and $V$ is an $(n+1)\times (n+1)$ matrix. Define $Q:=  \diag(q_j)$, $\Qt := \diag(\tilde{q}_j)$ for $j \in [n]$. We require $q_0=\qt_0\neq 0$, $V = (v_{i,j})$ with $v_{0,j}=v_{i,0}=1$ and
\begin{equation} \label{eqn:QV}
    QV\Qt V\Trn = q_0 I_{n+1}.
\end{equation}
Let $\{\xi_j:j\in [n]\}$ be the Grassmann variables subject to the following commuting rules:
\begin{align*}
    \xi_i \xi_j = - \xi_j \xi_i, \quad &  \text{ for all } i, j\in [n]; \\
    \xi_i x_k = x_k \xi_i, \quad  & \text{ for all } i \in [n], k\in [m].
\end{align*}
We omit the wedge $\wedge$ between two variables whenever the anticommutativity is clear.
Consequently $\xi_j^2 = 0$ for all $j\in [n]$.

Let $d\in \N$ be such that $d\leq n+1$. Let $\eps, \epst\in \ZeeII^{n+1}$ satisfy $|\eps|=|\epst| = d$. The odd Krawtchouk polynomials $\kP\od(\eps, \epst) = \kP\od(\eps, \epst; \Lambda, d)$ are defined as follows:
\begin{equation} \label{eqn:oddPower}
    \prod_{i=0}^n\left(\sum_{j=0}^n v_{i,j} \xi_j\right)^{\tilde \eps_i}
=
\sum_{\eps}
d!\kP\od(\eps,\tilde \eps)
\xi^{\eps},
\end{equation}
where the products are taken in the normal order $\xi_0^{\eps_0}\cdots \xi_n^{\eps_n}$. Here we note that $\eps! = 1$ for all $\eps\in \ZeeII^{n+1}$.
%This formulation parallels the even case.
A priori $\kP\od$ are just functions depending on $\eps, \epst$, and parameters $q, \qt, V$ and $d$. 
%These $\kP\od$, however, are again polynomials. 
%To see this,
To see that $\kP\od$ are polynomials, observe that on the left side, only the factors with $\epst_i=1$ contribute non-trivially. Denote the index set $\{i: \epst_i = 1\}$ by $I'$. The expansion gives a homogeneous degree $d$ polynomial in anticommuting products of $\xi_j$. Let $J = \{j_k:j_0 < \cdots < j_{d-1}\}\subseteq [n]$. Then the coefficient of $\xi_J = \xi_{j_0} \cdots \xi_{j_{d-1}}$ is precisely the determinant of the minor
%The coefficient of, say, $z_{j_1} z_{j_2}\cdots z_{j_D}$ is precisely the determinant of the minor
\[
V_{I',J} := \left(v_{i,j} \right)_{i\in I', j\in J}.
\]
Let $A(\eps, \epst) := \diag(\tilde\eps) V \diag(\eps)$. The only potentially non-zero $d\times d$ minor is $\det V_{I',J}$. 
%Thus for a uniform polynomial description of these coefficients, we have 
Thus
\begin{equation} \label{eqn:oddP}
    \kP\od (\eps, \epst) = \frac{1}{d!}\sum_{I, J} \det A_{I,J}(\eps, \epst),
\end{equation}
where the sum is taken over all size $d$ index subsets $I, J\subseteq [n]$. 
When $\eps, \epst$ are viewed as formal variables, $\kP\od$ becomes a homogeneous polynomial of degree $2d$ linear in each variable.

To connect to a Lie superalgebra point of view in the next section, we give a mixed version of the set-up where the data $\K$ and $\Lambda$ are concatenated. Let
\[
\Imn := \{0, 1, \dots, m, m+1, \dots, m+n+1\}.
\]
For two square matrices $A$ and $B$ we write
\[
(A|B) := \diag (A|B) = \begin{pmatrix}
    A & 0 \\ 0 & B 
\end{pmatrix}.
\]
Set $Y := (y_{i,j})_{i,j\in \Imn} = (U|V)$. Using the properties $P, \Pt, Q, \Qt$, we see that
\begin{equation*} %\label{eqn:mxData}
    (p_0^{-1}P|q_0^{-1}Q) Y (\Pt| \Qt)  Y\Trn = I_{m+n+2}.
\end{equation*}

We further define $\{w_k: k\in \Imn\}$ with $w_i := x_i$, for $ i\in [m]$ and $w_{m+1+j} := \xi_j$ for $j\in [n]$.
For $\alpha \in \N^{m+1}$ and $\eps \in \ZeeII^{n+1}$ we write $(\alpha, \eps)$ for the concatenated $(m+n+2)$-tuple. 
Let
\[
\A := \N^{m+1} \times \ZeeII^{n+1}, \quad \A^D := \{(\alpha, \eps)\in \A: |(\alpha, \eps)|=D\}.
\]
For $(\alphat, \epst)\in \A^D$ we write
\begin{equation} \label{eqn:mxPower}
    \prod_{i\in \Imn} \left(\sum_{j\in \Imn} y_{i,j} w_j\right)^{(\alphat, \epst)_i}
=
\sum_{(\alpha, \eps)\in \A^D }
\frac{D!}{\alpha!}\kP(\alpha, \eps, \alphat,\epst )
w^{(\alpha, \eps)}.
\end{equation}
Here $\kP(\alpha,\eps,\alphat,\epst ) = \kP(\alpha,\eps,\alphat,\epst; \K, \Lambda, D )$ is a polynomial by the above discussion. 
Note that the left side of \eqn{\ref{eqn:mxPower}} is the product of the left sides of \eqn{\ref{eqn:evenPower}} and \eqn{\ref{eqn:oddPower}}. Let $d = |\eps| = |\epst|$. Thus we must have
\begin{equation} \label{eqn:mxkP}
    \kP(\alpha, \eps,\alphat,\epst ) = \binom{D}{d}^{-1} \kP\ev (\alpha, \alphat;\K, D-d) \kP\od (\eps, \epst; \Lambda, d).
\end{equation}
Note that
$\kP(\alpha,\eps,\alphat,\epst)=0$ unless $|\alpha|=|\alphat|$ and $|\eps|=|\epst|$. For this reason, we further set
\[
\A^{D-d,d} := \{(\alpha, \eps)\in \A^D: |\eps|=d\}.
\]
Thus $\kP(\alpha,\eps,\alphat,\epst)=0$ unless $(\alpha,\eps), (\alphat,\epst)\in \A^{D-d,d}$.

\begin{rmk} \label{rmk:duality}
    We note that the space of parameters carries a natural duality involution.
    By \eqn{\ref{eqn:PU}} and \eqn{\ref{eqn:QV}}, the following map 
    \[
    (\K, \Lambda) = (p, \pt, U, q, \qt, V) \mapsto (\K', \Lambda') := (\pt, p, U\Trn, \qt, q, V\Trn)
    \]
    is an involution on the space of parameters.
    By \cite[Eq.~(1.3)]{I12} (cf. \cite{MT04}), 
    \[
    \kP\ev(\alpha, \alphat; \K, D) = \kP\ev(\alphat, \alpha; \K', D).
    \]
    Similarly in our setting, from \eqn{\ref{eqn:oddP}}, we see that transposing $V$ amounts to interchanging $\eps$ and $\epst$ in \eqn{\ref{eqn:oddPower}}, i.e.,
    \[
    \kP\od (\eps, \epst; \Lambda, d) = \kP\od (\epst, \eps; \Lambda', d).
    \]
    Using \eqn{\ref{eqn:mxkP}} we have
    \[
    \kP(\alpha,\eps,\alphat,\epst; \K, \Lambda, D ) = \kP(\alphat,\epst,\alpha,\eps; \K', \Lambda', D ).
    \]
\end{rmk}

\subsection{Lie superalgebras} \label{subsec:LSA}
Consider the vector superspace $\C^{m+1|n+1}$ with standard homogeneous basis indexed by $\Imn =\{0,1,\dots,m+n+1\}$.
We assign parity to $i\in \Imn$ by
\[
|i|=
\begin{cases}
0, & 0\le i\le m,\\
1, & m+1\le i\le m+n+1.
\end{cases}
\]
Relative to this basis, each element of \(\End(\C^{m+1|n+1})\) is represented by a block matrix
\[
\left( \begin{array}{c|c} A & B\\ \hline C & D\end{array}\right ).
\]
Let $E_{i,j}$ denote the matrix with 1 in the $(i,j)$-th entry and 0 elsewhere, and set
\begin{align*}
    A_{i,j} &:=E_{i,j}, \text{ for } i,j\in [m];  &&B_{i,j}:=E_{i,m+1+j}, \text{ for } i \in [m], j\in [n]; \\
    C_{i,j} &:=E_{m+1+i,j}, \text{ for } i \in [n], j\in [m];  && D_{i,j}:=E_{m+1+i,m+1+j}, \text{ for } i,j\in [n]. 
\end{align*}
The even part of \(\End(\C^{m+1|n+1})\) consists of the block diagonal matrices, while the odd part consists of the block off-diagonal matrices. Equipped with the supercommutator
\[
[X,Y]:=XY-(-1)^{|X||Y|}YX
\]
for homogeneous $X,Y\in \End(\C^{m+1|n+1})$, extended bilinearly, $\End(\C^{m+1|n+1})$ becomes the Lie superalgebra
\[
\gk:=\gl(m+1|n+1).
\]

Let $\hk$ denote the standard Cartan subalgebra of $\gk$ spanned by the matrices $E_{i, i}$ for all $i\in \Imn$.
Choose $\thetat$, $\kappat$ so that 
\[
R:= \thetat \Pt U\Trn, \quad S:= \kappat\Qt V\Trn
\]
satisfy $\det R = \det S = 1$. Let $\theta := (p_0 \thetat)^{-1}$ and $\kappa := (q_0 \kappat)^{-1}$. 
We further define for $X\in \gk$
\[
\tilde{X} := (R|S) X (R^{-1}|S^{-1}).
\]
The elements $\tilde{E}_{i,i}$ span a new Cartan subalgebra, which we denote by $\hkt$. 

On $\gk$ the supertranspose map is given by
\[
X = \left( \begin{array}{c|c}A & B\\ \hline C & D\end{array}\right )\mapsto X\sTrn := \left( \begin{array}{c|c}A\Trn & C\Trn \\ \hline -B\Trn & D\Trn\end{array}\right ).
\]
Define on $\gk$
\[
\varphi: X \mapsto  (\theta^{-1}\Pt| \kappa^{-1}\Qt) X\sTrn  (\theta^{-1}\Pt| \kappa^{-1}\Qt)^{-1}.
\]
Then $\varphi$ is a super antiautomorphism. %satisfying $\varphi^4 = \mathrm{Id}$.
The following lemma follows from direct calculations.
\begin{lem} \label{lem:phiCalc} 
We have the following equations:
    \begin{equation*}
    \varphi(E_{i,i}) = E_{i,i}, \quad \varphi(\tilde{E}_{i,i})=\tilde{E}_{i,i}, \quad \text{ for all }i\in \Imn,
\end{equation*}
and for all indices $i$ and $j$ for which the expression is defined,
\begin{alignat*}{4}
\varphi(A_{i,j})          &= \frac{\pt_j}{\pt_i}A_{j,i}, 
&\qquad \varphi(\tilde{A}_{i,j}) &= \frac{p_j}{p_i}\tilde{A}_{j,i};
&\qquad \varphi(B_{i,j})          &= -\frac{\theta\qt_j}{\kappa\pt_i}C_{j,i},
&\qquad \varphi(\tilde{B}_{i,j}) &= -\frac{\thetat q_j}{\kappat p_i}\tilde{C}_{j,i};\\
\varphi(C_{i,j})          &= \frac{\kappa\pt_j}{\theta\qt_i}B_{j,i},
&\qquad \varphi(\tilde{C}_{i,j}) &= \frac{\kappat p_j}{\thetat q_i}\tilde{B}_{j,i};
&\qquad \varphi(D_{i,j})          &= \frac{\qt_j}{\qt_i}D_{j,i},
&\qquad \varphi(\tilde{D}_{i,j}) &= \frac{q_j}{q_i}\tilde{D}_{j,i}.
\end{alignat*}
\end{lem}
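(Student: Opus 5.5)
The plan is to reduce every formula to one elementary computation: how the supertranspose followed by conjugation by a diagonal even matrix acts on a matrix unit. Write $T := (\theta^{-1}\Pt|\kappa^{-1}\Qt) = \diag(t_a)_{a\in\Imn}$, so that $\varphi(X) = T X\sTrn T^{-1}$ and $T E_{a,b} T^{-1} = \frac{t_a}{t_b} E_{a,b}$.

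\emph{Untilded formulas.} From the block definition of the supertranspose we read off
\[
A_{i,j}\sTrn = A_{j,i},\quad B_{i,j}\sTrn = -C_{j,i},\quad C_{i,j}\sTrn = B_{j,i},\quad D_{i,j}\sTrn = D_{j,i},\quad E_{i,i}\sTrn = E_{i,i}.
\]
Conjugating by $T$ then multiplies by the ratio of the corresponding entries $t_a/t_b$. For example, for $B_{i,j}$ we get $-\frac{t_{m+1+j}}{t_i}C_{j,i} = -\frac{\kappa^{-1}\qt_j}{\theta^{-1}\pt_i}C_{j,i}$, which is the claimed coefficient $-\frac{\theta\qt_j}{\kappa\pt_i}$. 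The $A$, $C$ and $D$ cases are identical bookkeeping, and the diagonal $E_{i,i}$ is plainly fixed.

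\emph{Tilded formulas.} Put $W := (R|S)$, so that $\tilde X = W X W^{-1}$. Since $W$ is even (block diagonal), the supertranspose reverses this product with no sign, and $W\sTrn = (R\Trn|S\Trn)$. Hence $(WXW^{-1})\sTrn = (W\Trn)^{-1} X\sTrn W\Trn$, and therefore
\[
\varphi(\tilde X) = W\bigl(M X\sTrn M^{-1}\bigr)W^{-1}, \qquad M := W^{-1} T (W\Trn)^{-1}.
\]
The key step is to show that $M$ is again diagonal. For the even block, $R = \thetat \Pt U\Trn$ gives
\[
R^{-1}\Pt (R\Trn)^{-1} = \thetat^{-2}(U\Pt U\Trn)^{-1}.
\]
By \eqn{\ref{eqn:PU}} we have $U\Pt U\Trn = p_0 P^{-1}$. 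Multiplying by $\theta^{-1} = p_0\thetat$, the even block of $M$ is therefore $\thetat^{-1}P$. In the same way, \eqn{\ref{eqn:QV}} gives the odd block $\kappat^{-1}Q$. Thus $M = (\thetat^{-1}P|\kappat^{-1}Q)$.

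Consequently $\varphi(\tilde X) = \widetilde{\psi(X)}$, where $\psi(X) := M X\sTrn M^{-1}$ has exactly the same shape as $\varphi$, with $(\theta,\kappa,\pt,\qt)$ replaced by $(\thetat,\kappat,p,q)$. Running the untilded computation for $\psi$ then yields each tilded formula. For instance,
\[
\varphi(\tilde B_{i,j}) = -\frac{\thetat q_j}{\kappat p_i}\tilde C_{j,i}.
\]

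The only point needing care is the identification $M = (\thetat^{-1}P|\kappat^{-1}Q)$: it needs the correct inverse-transpose bookkeeping and the orthogonality relations \eqn{\ref{eqn:PU}}–\eqn{\ref{eqn:QV}}, together with the fact that no supertranspose sign appears because $W$ is even. Everything else is routine entrywise checking.
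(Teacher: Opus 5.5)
Your proposal is correct. The supertranspose-then-diagonal-conjugation bookkeeping gives the untilded formulas, and the identity $W^{-1}(\theta^{-1}\Pt|\kappa^{-1}\Qt)(W\Trn)^{-1}=(\thetat^{-1}P|\kappat^{-1}Q)$ follows from \eqn{\ref{eqn:PU}}, \eqn{\ref{eqn:QV}}, $\theta^{-1}=p_0\thetat$ and $\kappa^{-1}=q_0\kappat$; it reduces the tilded formulas to the same computation with dual parameters. This is a clean, well-organized version of the ``direct calculations'' the paper invokes without giving details, so it is essentially the paper's approach.
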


We also record the following change of basis calculation for the two Cartan subalgebras $\hk$ and $\hkt$.

\begin{lem} \label{lem:cartanSwap}
%Let the notation be as above. 
For $i\in [m]$, we have
    \begin{align} 
        \tilde{A}_{i,i} &= p_0^{-1}p_i\sum_{0\leq k,l\leq m}\pt_k u_{i,k}u_{i,l} {A}_{k,l}, \label{eqn:CartanEv}\\
        A_{i,i} &= p_0^{-1}\pt_i\sum_{0\leq k,l\leq m}p_k u_{k,i}u_{l,i} \tilde{A}_{k,l}, \label{eqn:tCartanEv}
        %&= q_0^{-1}\qt_i \left(\sum_{0\leq k\neq l\leq n}q_k v_{k,i}v_{l,i} \tilde{E}_{k,l} + \sum_{0\leq k\leq n}q_k v_{k,i}v_{k,i} \tilde{E}_{k,k} \right)
    \end{align}
    and similarly for $i\in [n]$, 
    \begin{equation}\label{eqn:tCartan}
        \tilde{D}_{i,i} = q_0^{-1}q_i\sum_{0\leq k,l\leq n}\qt_k v_{i,k}v_{i,l} {D}_{k,l}, \quad D_{i,i} = q_0^{-1}\qt_i\sum_{0\leq k,l\leq n}q_k v_{k,i}v_{l,i} \tilde{D}_{k,l}. %\notag \\
        %&= q_0^{-1}\qt_i \left(\sum_{0\leq k\neq l\leq n}q_k v_{k,i}v_{l,i} \tilde{E}_{k,l} + \sum_{0\leq k\leq n}q_k v_{k,i}v_{k,i} \tilde{E}_{k,k} \right)
    \end{equation}
\end{lem}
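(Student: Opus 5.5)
The plan is to compute directly with the explicit form of $R$ and its inverse. Since $(R|S)$ is block diagonal and $A_{i,i}$, $D_{i,i}$ lie in the diagonal blocks, conjugation by $(R|S)$ acts on the even block by $R$ and on the odd block by $S$ separately. It is ordinary matrix conjugation inside $\End(\C^{m+1|n+1})$, so no super signs enter. It therefore suffices to prove the even formulas \eqref{eqn:CartanEv} and \eqref{eqn:tCartanEv}. The odd formulas \eqref{eqn:tCartan} then follow verbatim by replacing $(p,\pt,U,\thetat,R)$ with $(q,\qt,V,\kappat,S)$ and using \eqref{eqn:QV} in place of \eqref{eqn:PU}.

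\textbf{Step 1 (inverse of $R$).} From \eqref{eqn:PU} we have $\bigl(p_0^{-1}PU\Pt\bigr)U\Trn=I_{m+1}$. Hence $(U\Trn)^{-1}=p_0^{-1}PU\Pt$, and so
\[
R^{-1}=\thetat^{-1}(U\Trn)^{-1}\Pt^{-1}=\thetat^{-1}p_0^{-1}PU .
\]
Here $\Pt$ is invertible because \eqref{eqn:PU} forces $\det P\det\Pt\det U^2=p_0^{m+1}\neq 0$. In coordinates this gives $R_{i,l}=\thetat\,\pt_i u_{l,i}$ and $(R^{-1})_{k,i}=\thetat^{-1}p_0^{-1}p_k u_{k,i}$.

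\textbf{Step 2 (first formula).} Next, $\tilde A_{i,i}=RE_{i,i}R^{-1}$ on the even block, and the $(k,l)$ entry of $RE_{i,i}R^{-1}$ is $R_{k,i}(R^{-1})_{i,l}$. Substituting the coordinates from Step 1, the factors $\thetat$ cancel and the entry is $p_0^{-1}\pt_k u_{i,k}\,p_i u_{i,l}$. Summing against $A_{k,l}$ gives \eqref{eqn:CartanEv}.

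\textbf{Step 3 (second formula).} For the reverse direction, write $A_{i,i}=R^{-1}\tilde A_{i,i}R$ and expand $E_{i,i}=\sum_{k,l}c_{k,l}E_{k,l}$ with unknown coefficients $c_{k,l}$. Applying the linear map $X\mapsto RXR^{-1}$ to the identity $E_{i,i}=R\,(R^{-1}E_{i,i}R)\,R^{-1}$ gives
\[
A_{i,i}=\sum_{k,l}(R^{-1})_{k,i}R_{i,l}\,\tilde A_{k,l}.
\]
Inserting the coordinates from Step 1 yields $p_0^{-1}\pt_i\sum_{k,l}p_k u_{k,i}u_{l,i}\tilde A_{k,l}$, which is \eqref{eqn:tCartanEv}.

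The only real point is Step 1: inverting $R$ using \eqref{eqn:PU} and tracking transposes correctly. The rest is index bookkeeping, and the normalizations $\det R=\det S=1$ play no role.
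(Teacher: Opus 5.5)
Your proposal is correct and takes the paper's route: an entrywise computation of $RE_{i,i}R^{-1}$ for the first formula, then reading off the coefficients of $A_{i,i}$ in the basis $\tilde A_{k,l}$ from $R^{-1}E_{i,i}R$ for the second, with the odd block handled identically. Your explicit inversion $R^{-1}=\thetat^{-1}p_0^{-1}PU$ via \eqref{eqn:PU} supplies a detail the paper leaves implicit; one small wording fix is that the unknown coefficients in Step 3 should be those of $A_{i,i}$ in the $\tilde A_{k,l}$, not of $E_{i,i}$ in the $E_{k,l}$, though the resulting formula is right.
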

\begin{proof}
    \eqn{\ref{eqn:CartanEv}} follows from a direct matrix computation by the definition of the mapping $X\mapsto \tilde{X}$.
    For \eqn{\ref{eqn:tCartanEv}}, one may use the duality in Remark~\ref{rmk:duality}, or directly as follows. Write $A_{i,i} = \sum a_{k,l} \tilde{A}_{k,l}$ for scalars $a_{k,l}$, then we observe that we must have $R^{-1} A_{i,i} R = \sum a_{k,l} A_{k,l}$ with which we may calculate $a_{k,l}$. The proof of \eqn{\ref{eqn:tCartan}} is similar.
\end{proof}

\section{Polynomial representations and a bilinear form} \label{sec:rep}
In this section we introduce the polynomial modules for $\gk = \gl(m+1|n+1)$ and define a bilinear form used to show the orthogonality theorem. 
Let us also point out that in \cite{I12}, $\mathfrak{sl}$ was used instead of $\gl$. The difference is inessential. 

\subsection{Polynomial representations}
As in the previous section, let $\{x_i:i\in [m]\}$ be $m+1$ commuting variables and $\{\xi_j:j\in [n]\}$ be $n+1$ anticommuting variables. % such that $x_i\xi_j = \xi_j x_i$ for all suitable indices $i$ and $j$. 
We also set 
\[
w_i := x_i, \text{ for } i\in [m]; \quad w_{m+1+j} := \xi_j \text{ for } j\in [n]; \quad \partial_i = \partial_{w_i} \text{ for } i\in \Imn.
\]
Let $\Pk$ denote the supersymmetric algebra on the variables $\{w_i : i\in\Imn \}$. Explicitly,
\[
\Pk = \C[x_i:i\in [m]] \otimes \bigwedge[\xi_j: j\in [n]].
\]
For $(\alpha,\eps)\in \A^D $, we let $\deg x^\alpha \xi^\eps := |\alpha| + |\eps|$. We further define
\[
\Pk^D :=\Spn \{x^\alpha \xi^\eps: (\alpha, \eps)\in \A^D\}.
\]

The Lie superalgebra $\gk$ acts on $\Pk$ by the following standard differential operators:
\begin{equation} \label{eqn:Eijaction}
    \rho: E_{i,j} \mapsto w_i \partial_j.
\end{equation}
For convenience, we write $X.p$ for $\rho(X)(p)$ for all $X\in \gk$ and $p\in \Pk$.
The operator $\partial_j$ satisfies the graded Leibniz rule
\begin{equation} \label{eqn:Leibniz}
    \partial_j (ab) = \partial_j (a)b+(-1)^{|j||a|}a\partial_j(b),
\end{equation}
for homogeneous $a,b\in \Pk$.
Let $u_i$ (resp. $v_i$) be the standard basis vector with 1 in the $i$-th entry and 0 elsewhere for $\N^{m+1}$ (resp. $\ZeeII^{n+1}$), and for $\eps\in \ZeeII^{n+1}$ set
\[
s_j(\eps) := \sum_{k=0}^{j-1} \eps_k.
\] 
Consequently, we have the following explicit action of $\gk$ on monomials $x^\alpha \xi^\eps$ by \eqn{\ref{eqn:Eijaction}}.
% \begin{align*}
%     x_i\partial_{x_j} x^\alpha \xi^\eps &= \alpha_j x^{\alpha+u_i-u_j} \xi^{\eps},\\
%     x_i\partial_{\xi_j} x^\alpha \xi^\eps &= (-1)^{s_j(\eps)} \eps_j x^{\alpha+u_i} \xi^{\eps-v_j}, \\
%     \xi_i \partial_{x_j} x^\alpha \xi^\eps &= (-1)^{s_i(\eps)} \alpha_j x^{\alpha-u_j} \xi^{\eps+v_i}, \\
%     \xi_i \partial_{\xi_j} x^\alpha \xi^\eps &= (-1)^{s_j(\eps)+s_i(\eps-v_j)} \eps_j x^{\alpha} \xi^{\eps+v_i-v_j}.
% \end{align*}
\begin{equation} \label{eqn:EijAct}
    T\,x^\alpha \xi^\eps=
    \begin{cases}
    \alpha_j x^{\alpha+u_i-u_j}\xi^\eps,
    & T=x_i\partial_{x_j},\quad i,j\in [m],\\[4pt]
    (-1)^{s_j(\eps)}\eps_j\,x^{\alpha+u_i}\xi^{\eps-v_j},
    & T=x_i\partial_{\xi_j},\quad i\in [m],\ j\in [n],\\[4pt]
    (-1)^{s_i(\eps)}\alpha_j\,x^{\alpha-u_j}\xi^{\eps+v_i},
    & T=\xi_i\partial_{x_j},\quad i\in [n],\ j\in [m],\\[4pt]
    (-1)^{s_j(\eps)+s_i(\eps-v_j)}\eps_j\,x^\alpha\xi^{\eps+v_i-v_j},
    & T=\xi_i\partial_{\xi_j},\quad i,j\in [n].
    \end{cases}
\end{equation}
Here we recall that for $\eps_i \geq 2$, the monomial $\xi^\eps$ vanishes by the anticommutativity. 

For later use, we give the following identity. 
\begin{lem} \label{lem:leibSign}
    For all $\eps\in \ZeeII^{n+1}$ and $i,j\in [n]$, we have
\begin{equation}
    s_i(\eps-v_j)+s_j(\eps) = s_i(\eps+v_i-v_j)+s_j(\eps-v_j).
\end{equation}
\end{lem}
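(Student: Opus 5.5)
The plan is to prove the identity by regarding each $s_k$ as a linear functional on all of $\Z^{n+1}$, not just on $\ZeeII^{n+1}$. The definition $s_k(\eps)=\sum_{l=0}^{k-1}\eps_l$ makes sense for any integer tuple. The arguments $\eps-v_j$ and $\eps+v_i-v_j$ may have entries $-1$ or $2$, so they need not lie in $\ZeeII^{n+1}$. This extension is exactly what is needed for the identity to be meaningful as written, and it is also how it will be used later: as an equality of sign exponents, applied when the relevant monomials are nonzero. With this convention, $s_k(\eta+\eta')=s_k(\eta)+s_k(\eta')$ for all $\eta,\eta'\in\Z^{n+1}$.

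The key observation concerns the standard basis vectors. For every $k$ we have $s_k(v_k)=0$, because the sum defining $s_k$ only runs over the indices $l<k$, and $v_k$ is supported at $l=k$.

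Applying linearity and this observation to the right-hand side gives, first,
\[
s_i(\eps+v_i-v_j)=s_i(\eps-v_j)+s_i(v_i)=s_i(\eps-v_j),
\]
and second,
\[
s_j(\eps-v_j)=s_j(\eps)-s_j(v_j)=s_j(\eps).
\]
Adding these two equalities gives exactly the claimed identity $s_i(\eps-v_j)+s_j(\eps)=s_i(\eps+v_i-v_j)+s_j(\eps-v_j)$. Notably, no case distinction on the relative order of $i$ and $j$ is needed, and neither is the case $i=j$.

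I do not expect a genuine obstacle here. The only point requiring care is the remark at the start: $s_k$ must be interpreted on $\Z^{n+1}$, and the identity must be read as an identity of integers, not merely modulo $2$ on genuine $\ZeeII$-tuples. If one instead insisted on working only with $\ZeeII^{n+1}$, one would have to split into the cases $\eps_j=0$ or $\eps_j=1$ and $\eps_i=0$ or $\eps_i=1$. That is unnecessary, since in every case where $\eps_j=1$ and $(\eps-v_j)_i=0$ the computation above applies verbatim.
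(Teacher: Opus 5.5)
Your proof is correct and follows the paper's argument. The paper likewise notes that $s_k$ only sees entries with index less than $k$, so $s_j(\eps)=s_j(\eps-v_j)$ and $s_i(\eps-v_j)=s_i(\eps+v_i-v_j)$, and then adds the two equalities; your framing of $s_k$ as a linear functional on $\Z^{n+1}$ with $s_k(v_k)=0$ simply makes that step explicit.
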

\begin{proof}
    Note that $s_j(\eps)$ only records entries with indices less than $j$. Thus, $s_j(\eps) = s_j(\eps-v_j)$ and $s_i(\eps-v_j) = s_i(\eps+v_i-v_j)$. The result follows.
\end{proof}

We give the following standard homogeneous module decomposition for $\Pk$.

\begin{prop}
    We have 
    \[
    \Pk = \bigoplus_{D\in \N} \Pk^D
    \]
    as a $\gk$-module decomposition where each component $\Pk^D$ is irreducible.
\end{prop}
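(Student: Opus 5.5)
The plan is to prove the two claims separately: first that the sum $\Pk=\bigoplus_D \Pk^D$ is a decomposition into $\gk$-submodules, and then that each $\Pk^D$ is irreducible.

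\emph{Invariance.} Each operator $\rho(E_{i,j})=w_i\partial_j$ removes one variable and adds one. Hence it maps $\Pk^D$ into $\Pk^D$; this is visible directly from \eqn{\ref{eqn:EijAct}}. The monomials $x^\alpha\xi^\eps$ with $(\alpha,\eps)\in\A$ form a basis of $\Pk$, and they are partitioned according to total degree. So $\Pk$ is the direct sum of the $\Pk^D$ as vector spaces, and by the first remark each summand is a submodule.

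\emph{Irreducibility.} I would use a highest-weight-vector argument with respect to $\hk$. Every monomial $x^\alpha\xi^\eps$ is an $\hk$-weight vector: by \eqn{\ref{eqn:EijAct}}, $E_{i,i}$ acts on it by the scalar $(\alpha,\eps)_i$. Distinct $(\alpha,\eps)$ give distinct weights, so all weight spaces of $\Pk^D$ are one-dimensional. Now let $W\subseteq \Pk^D$ be a nonzero submodule. Since $W$ is $\hk$-stable and the weights are distinct, $W$ is spanned by the monomials it contains. In particular, $W$ contains some monomial $x^\alpha\xi^\eps$.

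Next I would show that from any monomial one can reach $x_0^D$. First I kill the odd variables. If $\eps_j=1$, apply $x_0\partial_{\xi_j}$; by \eqn{\ref{eqn:EijAct}} this produces $\pm x^{\alpha+u_0}\xi^{\eps-v_j}$, which is nonzero. Repeating this, $W$ contains $x^{\alpha'}$ with $|\alpha'|=D$. Then apply $x_0\partial_{x_j}$ repeatedly for each $j\ge1$. The coefficients $\alpha'_j$ are nonzero positive integers, so we reach a nonzero multiple of $x_0^D$.

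Conversely, starting from $x_0^D$ every monomial is reachable, so $x_0^D$ generates $\Pk^D$. Apply $x_i\partial_{x_0}$ to build up $x^\alpha x_0^{\alpha_0+|\eps|}$; the coefficients are positive integers, hence nonzero. Then apply $\xi_j\partial_{x_0}$ for each $j$ with $\eps_j=1$. By \eqn{\ref{eqn:EijAct}} each step multiplies by $\pm(\text{current exponent of }x_0)$, and this exponent stays positive as long as $x_0$ factors remain. So $W=\Pk^D$.

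\emph{Main obstacle.} The only delicate points are keeping track of signs in the odd steps, which do not matter since only nonvanishing is needed, and the case $D>0$ with large $|\eps|$. There one must check that enough copies of $x_0$ are available when inserting $\xi_j$. This works because we route through $x_0^D$, which always has $D\ge|\eps|$ factors of $x_0$ available. Note also that $\Pk^D$ is nonzero for every $D$, because $m\ge0$ gives the even variable $x_0$; for this reason the even variables never run out, unlike in the purely odd case.
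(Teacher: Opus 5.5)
Your proposal is correct and follows the same route as the paper's proof. Degree preservation makes each $\Pk^D$ a submodule; one-dimensional $\hk$-weight spaces force every nonzero submodule to contain a monomial; and suitable $E_{i,j}$'s then generate all monomials of degree $D$. Your routing through $x_0^D$, checking at each step that the coefficients are nonzero, spells out the step the paper leaves as ``applying suitable $E_{i,j}$'s.''
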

\begin{proof}
    The action defined in \eqn{\ref{eqn:Eijaction}} preserves the total degree, showing that each $\Pk^D$ is a submodule. 
    %For each $\Pk^D$, each monomial is a weight vector of the Cartan subalgebra $\hk$. Moreover, the weight spaces are all one dimensional. Thus if $M\subseteq \Pk^D$ is a submodule, hence a weight module, it must contain a monomial.   
    To see that each $\Pk^D$ is irreducible, observe that each monomial $x^\alpha\xi^\eps\in \Pk^D$ is an $\hk$-weight vector, and the corresponding weight space is one dimensional. Thus any non-zero submodule \(M\subseteq \Pk^D\) contains a monomial $x^\alpha\xi^\eps$. Then by applying suitable $E_{i,j}$'s we may recover all monomials of the same degree. 
\end{proof}

Using $R$ and $S$ from Subsection~\ref{subsec:LSA}, we introduce a change of variables:
\begin{equation}
    (\xt_0, \dots, \xt_m | \xit_0, \dots, \xit_n) := (x_0, \dots, x_m | \xi_0, \dots, \xi_n)(R|S).
\end{equation}
It follows that
\begin{equation} \label{eqn:cov}
    \xt_i = \thetat\sum_{k = 0}^m \pt_k u_{ik}x_k, \quad \xit_j = \kappat\sum_{k = 0}^n \qt_k v_{jk}\xi_k.
\end{equation}
%Moreover, the following proposition says the monomials in $\xt$ and $\xit$ form a weight space basis for the Cartan subalgebra $\hkt$. 

\begin{prop} \label{prop:tEijAct}
    %Let the notation be as above. Then
    The basis $\{\xt^\alpha \xit^\eps\}$ is an $\hkt$-weight basis for $\Pk^D$. Explicitly,
    \begin{equation} \label{eqn:tEijAct}
    X.\xt^\alpha \xit^\eps=
    \begin{cases}
    \alpha_j \xt^{\alpha+u_i-u_j}\xit^\eps,
    & X=\tilde{A}_{i,j} ,\quad i,j\in [m],\\[4pt]
    (-1)^{s_j(\eps)}\eps_j\,\xt^{\alpha+u_i}\xit^{\eps-v_j},
    & X = \tilde{B}_{i,j},\quad i\in [m],\ j\in [n],\\[4pt]
    (-1)^{s_i(\eps)}\alpha_j\,\xt^{\alpha-u_j}\xit^{\eps+v_i},
    & X = \tilde{C}_{i,j},\quad i\in [n],\ j\in [m],\\[4pt]
    (-1)^{s_j(\eps)+s_i(\eps-v_j)}\eps_j\,\xt^\alpha\xit^{\eps+v_i-v_j},
    & X = \tilde{D}_{i,j},\quad i,j\in [n].
    \end{cases}
\end{equation}
\end{prop}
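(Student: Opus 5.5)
The plan is to show that the action $\rho$ intertwines conjugation by $(R|S)$ with the linear change of variables \eqn{\ref{eqn:cov}}. Write $G := (R|S)$, an even invertible matrix. Let $\sigma_G$ be the even algebra automorphism of $\Pk$ determined on generators by $w_i \mapsto \tilde w_i$, where $(\tilde w_i)_{i\in\Imn} := (\xt_0,\dots,\xt_m|\xit_0,\dots,\xit_n) = (w_i)G$, i.e. $\tilde w_i = \sum_k w_k G_{k,i}$. This map is well defined because $G$ is block diagonal: the $\xt_i$ are linear in the even variables and the $\xit_j$ are linear in the odd ones, so all supercommutation relations are preserved. It is bijective since $G$ is invertible, and it preserves $\Pk^D$.

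The key claim is
\[
\rho(\tilde E_{i,j}) = \tilde w_i\,\tilde\partial_j,
\]
where $\tilde\partial_j := \sigma_G\circ\partial_j\circ\sigma_G^{-1}$ is the graded derivation characterized by $\tilde\partial_j(\tilde w_k)=\delta_{jk}$. To verify it, first use the chain rule for graded derivations to get
\[
\tilde\partial_j = \sum_l (G^{-1})_{j,l}\,\partial_l .
\]
Indeed, both sides are derivations of parity $|j|$ (only indices $l$ of the same parity as $j$ contribute, again because $G$ is block diagonal), and both send $\tilde w_k$ to $\delta_{jk}$ by $G^{-1}G=I$. Then, since $\rho$ is linear,
\[
\rho(\tilde E_{i,j}) = \rho(G E_{i,j} G^{-1}) = \sum_{k,l} G_{k,i}(G^{-1})_{j,l}\, w_k\partial_l = \tilde w_i\,\tilde\partial_j .
\]
Equivalently, $\rho(\tilde X) = \sigma_G\,\rho(X)\,\sigma_G^{-1}$ for all $X\in\gk$.

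Given this, apply $\sigma_G$ to both sides of \eqn{\ref{eqn:EijAct}}. Since $\sigma_G(x^\alpha\xi^\eps)=\xt^\alpha\xit^\eps$, with the odd variables kept in the same normal order, we get
\[
\tilde E_{i,j}.\,\xt^\alpha\xit^\eps = \sigma_G\bigl(E_{i,j}.\,x^\alpha\xi^\eps\bigr).
\]
This is exactly \eqn{\ref{eqn:EijAct}} with tildes everywhere, including the same signs $(-1)^{s_j(\eps)}$ and so on, which is \eqn{\ref{eqn:tEijAct}}. The weight-basis statement follows from the case $i=j$: $\tilde E_{i,i}$ acts on $\xt^\alpha\xit^\eps$ by the scalar $(\alpha,\eps)_i$. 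Finally, $\{\xt^\alpha\xit^\eps : (\alpha,\eps)\in\A^D\}$ is a basis of $\Pk^D$ because it is the image of a basis under the automorphism $\sigma_G$.

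The main obstacle is the index and convention bookkeeping. One must check that the paper's definition $\tilde X = (R|S)X(R^{-1}|S^{-1})$ matches the row-vector substitution $(w)\mapsto(w)G$. If a transpose has crept in, I would recompute a single case, $\tilde A_{i,i}$ against \eqn{\ref{eqn:CartanEv}}, to fix the convention before proceeding. The odd signs need no separate check: they come out correctly automatically, since conjugating by the even automorphism $\sigma_G$ transports the normal-order sign conventions verbatim.
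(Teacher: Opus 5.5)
Your proposal is correct and follows essentially the same route as the paper. With $M=(R|S)$, the paper writes $\tilde w = wM$ and $\partial_{\tilde w_j}=\sum_l (M^{-1})_{j,l}\partial_{w_l}$, then matches $\tilde w_i\partial_{\tilde w_j}$ entrywise with $\rho(ME_{i,j}M^{-1})$, exactly as you do. Your extra step of transporting \eqn{\ref{eqn:EijAct}} through the automorphism $\sigma_G$ simply makes explicit the paper's ``direct computations,'' signs included. Your convention worry is moot: the paper's $\tilde X=(R|S)X(R^{-1}|S^{-1})$ and $(\xt|\xit)=(x|\xi)(R|S)$ are precisely your $GE_{i,j}G^{-1}$ and $\tilde w = wG$.
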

\begin{proof}
    In \eqn{\ref{eqn:cov}}, let $M = (R|S)$. Then we have $\tilde{w} = (\xt_0, \dots, \xt_m|\xit_0, \dots, \xit_n) = wM$, and
    \[
    \tilde{w}_i = \sum_{k\in \Imn} w_k M_{k,i}, \quad \partial_{\tilde{w}_j} = \sum_{l\in \Imn} (M^{-1})_{j,l}\partial_{w_{l}},
    \]
    from which it follows that
    \[
    \tilde{w}_i \partial_{\tilde{w}_j} = \sum_{k,l\in \Imn}M_{k,i}(M^{-1})_{j,l} w_k\partial_{w_l}.
    \]
    On the other hand, $\tilde{E}_{i,j} = ME_{i,j}M^{-1}$, so
    \[
    \tilde{E}_{i,j} = \sum_{k,l\in \Imn} M_{k,i}(M^{-1})_{j,l} E_{k,l}.
    \]
    Thus by \eqn{\ref{eqn:Eijaction}} we have $\rho(\tilde{E}_{i,j})= \tilde{w}_i \partial_{\tilde{w}_j}$ and \eqn{\ref{eqn:tEijAct}} now follows from direct computations.
\end{proof}

\subsection{A bilinear form}
Recall $\A := \N^{m+1} \times \ZeeII^{n+1}$ and $ \A^D := \{(\alpha, \eps)\in \A: |(\alpha, \eps)|=D\}$ from above.
We introduce the following bilinear form on $\Pk^D$:
\begin{equation} \label{eqn:defProd}
    \langle x^\alpha\xi^\eps, x^\beta \xi^\eta \rangle := \delta_{\alpha, \beta}\delta_{\eps, \eta} \frac{\alpha ! }{\tilde{p}^\alpha \tilde{q}^\eps} \theta^{|\alpha|}\kappa^{|\eps|}, \text{ for all } (\alpha, \eps), (\beta, \eta)\in \A^D.
\end{equation}

\begin{rmk}
    The form in \eqn{\ref{eqn:defProd}} is a natural extension of the form defined in \cite[\eqn{4.1}]{I12}. However, this differs from a typical supersymmetric bilinear form. In particular, for a supersymmetric bilinear form, every homogeneous vector of odd parity has zero self-pairing. Instead, the above form allows all basis monomials to have non-zero norm. 
\end{rmk}

We aim to show the following proposition, parallel to \cite[Lemma~4.1]{I12}. The proof requires some preparation concerning the action of $\gk$ on $\Pk^D$, and is thus postponed until later in this section. 

\begin{prop} \label{prop:tForm}
    %Let notation be as above. We have
    We have 
    \[
    \langle \xt^\alpha\xit^\eps, \xt^\beta \xit^\eta \rangle = \delta_{\alpha, \beta}\delta_{\eps, \eta} \frac{\alpha ! }{{p}^\alpha {q}^\eps} {\thetat}^{|\alpha|}{\kappat}^{|\eps|},
    \]
    for all $(\alpha, \eps), (\beta, \eta)\in \A^D$. 
\end{prop}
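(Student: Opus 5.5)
The plan has three steps: establish contravariance of the form with respect to $\varphi$, use it to get orthogonality of the tilde monomials, and then compute the norms by moving everything to a single extremal tilde monomial.

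\textbf{Contravariance.} Following the outline in the introduction, I first want
\[
\langle X.f, g\rangle = \pm\langle f, \varphi(X).g\rangle
\]
for $X\in\gk$ and monomials $f,g$. The sign is governed by the multi-color grading, in the sense of Proposition~\ref{prop:sign}. I would check this on generators $E_{i,j}$ acting on $x^\alpha\xi^\eps$, using \eqn{\ref{eqn:EijAct}} together with the formulas for $\varphi(A_{i,j}),\varphi(B_{i,j}),\varphi(C_{i,j}),\varphi(D_{i,j})$ from Lemma~\ref{lem:phiCalc}. For example, take $X=A_{i,j}$ with $f=x^\alpha\xi^\eps$ and $g=x^{\alpha+u_i-u_j}\xi^\eps$. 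Then
\[
\langle X.f,g\rangle=\alpha_j\,\frac{(\alpha+u_i-u_j)!}{\pt^{\alpha+u_i-u_j}\qt^\eps}\,\theta^{|\alpha|}\kappa^{|\eps|},
\qquad
\langle f,\tfrac{\pt_j}{\pt_i}A_{j,i}.g\rangle=\frac{\pt_j}{\pt_i}(\alpha_i+1)\,\frac{\alpha!}{\pt^\alpha\qt^\eps}\,\theta^{|\alpha|}\kappa^{|\eps|},
\]
and these agree. The $B$ and $C$ cases swap $\theta^{\pm1}\kappa^{\mp1}$, which matches the factors $\theta/\kappa$ and $\kappa/\theta$ in Lemma~\ref{lem:phiCalc}. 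The sign $(-1)^{s_j(\eps)}$ appears on both sides up to a color-dependent sign. The $D$ case uses Lemma~\ref{lem:leibSign}.

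\textbf{Orthogonality.} By Lemma~\ref{lem:phiCalc}, $\varphi(\tilde E_{i,i})=\tilde E_{i,i}$. The elements $\tilde E_{i,i}$ are even, so no sign enters and
\[
\langle \tilde E_{i,i}.f,g\rangle=\langle f,\tilde E_{i,i}.g\rangle .
\]
By Proposition~\ref{prop:tEijAct}, each $\xt^\alpha\xit^\eps$ is an $\hkt$-weight vector with weight $(\alpha,\eps)$, and distinct elements of $\A^D$ give distinct weights. Hence $\langle \xt^\alpha\xit^\eps,\xt^\beta\xit^\eta\rangle=0$ unless $(\alpha,\eps)=(\beta,\eta)$.

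\textbf{Norms.} Write $N(\alpha,\eps)$ for the self-pairing of $\xt^\alpha\xit^\eps$. I compare neighboring norms using the tilde-lowering and raising operators and the relations $\varphi(\tilde A_{i,j})=\frac{p_j}{p_i}\tilde A_{j,i}$, and similarly for $\tilde B,\tilde C,\tilde D$. For instance, $\tilde A_{i,j}.\xt^\alpha\xit^\eps=\alpha_j\xt^{\alpha+u_i-u_j}\xit^\eps$, and contravariance gives
\[
\alpha_j\, N(\alpha+u_i-u_j,\eps)=\frac{p_j}{p_i}(\alpha_i+1)\,N(\alpha,\eps).
\]
The other three cases give analogous ratios, with factors $\thetat/\kappat$ and $q$. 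These recursions force
\[
N(\alpha,\eps)=c\,\frac{\alpha!}{p^\alpha q^\eps}\,\thetat^{|\alpha|}\kappat^{|\eps|}
\]
up to one overall constant $c$, because $\A^D$ is connected under these moves. It then suffices to compute a single norm directly, say $\langle\xt_0^D,\xt_0^D\rangle$, or $\langle \xit_0\cdots\xit_{D-1},\xit_0\cdots\xit_{D-1}\rangle$ when $m$ is small.

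\textbf{Main obstacle.} I expect this base computation, together with the sign bookkeeping in the contravariance step, to be the hardest part. For the base case, expand $\xt_0^D=\thetat^D(\sum_k\pt_k x_k)^D$, using $u_{0,k}=1$. This gives
\[
\langle\xt_0^D,\xt_0^D\rangle=\thetat^{2D}\sum_{|\beta|=D}\Big(\frac{D!}{\beta!}\Big)^2\pt^{2\beta}\,\frac{\beta!}{\pt^\beta}\,\theta^D=\thetat^{2D}\theta^D D!\,|\pt|^D=\frac{D!}{p_0^D}\,\thetat^D,
\]
using $|\pt|=1$ and $\theta\thetat=p_0^{-1}$. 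This matches the claim for $\alpha=Du_0$, $\eps=0$, so $c=1$. The odd case can be handled similarly with a determinant identity, but one base point suffices.
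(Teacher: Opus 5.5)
Your orthogonality step and your $\tilde{A}$- and $\tilde{D}$-recursions are fine. For even $X$ the sign in Proposition~\ref{prop:sign} is identically $1$, so the identity extends by linearity to elements that are not color-homogeneous, such as $\hkt$ and $\tilde{D}_{i,j}$; this is all the paper needs. The gap is in the $\tilde{B}/\tilde{C}$ recursion, which in your argument is the only link between the sectors $|\eps|=d$. Your single base point $\xt_0^D$ has $\eps=0$, and the $\tilde{A}$, $\tilde{D}$ moves preserve $|\eps|$. Proposition~\ref{prop:sign} supplies the sign $(-1)^{|X|\overline{X}\cdot\overline{u}}$ only for $X=E_{i,j}$ and $u=x^\alpha\xi^\eps$. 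But $\tilde{B}_{i,j}=\sum_{k,l}R_{k,i}(S^{-1})_{j,l}B_{k,l}$ mixes the colors $v_l$, and $\xt^\alpha\xit^\eps$ is a combination of monomials of many colors. So that sign is undefined for the pairs you need. If it genuinely depended on color, as your phrase ``up to a color-dependent sign'' suggests, you could not pass to $\tilde{B}_{i,j}$ by linearity. The sign also matters: using $+\langle u,\varphi(\tilde{B}_{i,j}).v\rangle$ would give $N(\alpha+u_i,\eps-v_j)=-\frac{\thetat q_j}{\kappat p_i}(\alpha_i+1)N(\alpha,\eps)$, which contradicts the statement.

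The missing observation is that the sign is constant on each odd block. If $u=x^\alpha\xi^\eps$ has $\eps_l=0$, then $B_{k,l}.u=0$. Also $\varphi(B_{k,l})\propto C_{l,k}$ only produces monomials containing $\xi_l$, which are orthogonal to $u$. So both sides vanish, and $\langle X.u,v\rangle=-\langle u,\varphi(X).v\rangle$ holds for all $X\in\Spn\{B_{k,l}\}$ and all $u,v\in\Pk^D$. In the same way (both sides vanish when $\eps_k=1$), $\langle X.u,v\rangle=\langle u,\varphi(X).v\rangle$ for all $X\in\Spn\{C_{k,l}\}$. Equivalently, the form is contravariant for $X\mapsto(\theta^{-1}\Pt|\kappa^{-1}\Qt)X\Trn(\theta^{-1}\Pt|\kappa^{-1}\Qt)^{-1}$, built from the ordinary transpose. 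Since $(R|S)$ is block diagonal, $\tilde{B}_{i,j}$ lies in the $B$-block. Lemma~\ref{lem:phiCalc} then gives $N(\alpha+u_i,\eps-v_j)=\frac{\thetat q_j}{\kappat p_i}(\alpha_i+1)N(\alpha,\eps)$, as required. Your connectivity argument and your correct computation of $\langle\xt_0^D,\xt_0^D\rangle$ then finish the proof.

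With this patch your route is genuinely different from the paper's. The paper factors the form into even and odd parts and cites \cite[Lemma~4.1]{I12} for the even factor. For the odd factor it uses only the even operators $\tilde{D}_{i,j}$, plus a separate base norm $\langle\xit_{[d-1]},\xit_{[d-1]}\rangle$ in each degree $d$. That base norm comes from the Gram-determinant description of the form on $\bigwedge^d$, or all at once from Cauchy--Binet. You trade those per-degree determinant computations for the odd root vectors and one elementary even base case, and you reprove the even part rather than citing it.
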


The Lie superalgebra $\gk = \gl(m+1|n+1)$ is equipped with the standard $\ZeeII$-grading. There is, however, a finer description. Let $v_k$ denote the $k$-th standard basis vector of $\ZeeII^{n+1}$. Whenever $k<0$, the symbol $v_k$ is regarded as 0.
Also recall from Subsection~\ref{subsec:LSA} that we defined parity for $\Imn$. 
For $E_{i,j} \in \gk$, we set
\[
\overline{E_{i,j}} := |i| v_{i-(m+1)} +|j|v_{j-(m+1)}.
\]
Equivalently, we may start by setting $\deg x_i = 0$ and $\deg \xi_j = v_j \in \ZeeII^{n+1}$ on $\C^{\Imn} = \Spn\{x_i, \xi_j\}$. As $\gk$ is identified with $(\C^{\Imn})^*\otimes \C^{\Imn}$, the odd part $\gk\od$ inherits the grading described above.
Explicitly,
\[
\overline{E_{i,j}}= \begin{cases}
    0, & |i|= |j| = 0\\
    v_{i-(m+1)}+v_{j-(m+1)}, & |i| = |j| = 1\\
    v_{i-(m+1)},  & |i| = 1, |j| = 0\\
    v_{j-(m+1)} & |i| = 0, |j| = 1\\
\end{cases} \;.
\]

We define $\gk_{\eps} := \Spn \{E_{i,j}: \overline{E_{i,j}} = \eps\}$ for $\eps\in \ZeeII^{n+1}$.
Thus
\[
\gk = \bigoplus_{\eps\in \ZeeII^{n+1}} \gk_\eps.
\]
This grading records the odd indices as a vector in $\ZeeII^{n+1}$ modulo 2. From a physics point of view, it records which fermionic positions occur in $E_{i,j}$ modulo 2.
This multi-color $\ZeeII^{n+1}$-grading is consistent with the usual $\ZeeII$-super grading in the following sense.

\begin{lem} \label{lem:colorSign}
    For all $\eps, \eta\in \ZeeII^{n+1}$, we have $[\gk_\eps , \gk_\eta ] \subseteq \gk_{\eps+\eta}$.
\end{lem}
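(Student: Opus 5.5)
Since the supercommutator is bilinear and each $\gk_\eps$ is spanned by matrix units, it suffices to check that for all $i,j,k,l\in\Imn$ with $E_{i,j}\in\gk_\eps$ and $E_{k,l}\in\gk_\eta$, the element $[E_{i,j},E_{k,l}]$ lies in $\gk_{\eps+\eta}$. First, the definition of $\overline{E_{i,j}}$ equips each index with a color: set $c(a):=|a|\,v_{a-(m+1)}\in\ZeeII^{n+1}$ for $a\in\Imn$. Then $c(a)=0$ for even $a$, $c(a)=v_{a-(m+1)}$ for odd $a$, and $\overline{E_{i,j}}=c(i)+c(j)$. This is just the statement that $\gk\cong(\C^{\Imn})^*\otimes\C^{\Imn}$ inherits its grading from the grading $\deg w_a=c(a)$; over $\ZeeII$ the dual carries the same color.

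Next I compute the bracket on matrix units:
\[
[E_{i,j},E_{k,l}]=\delta_{j,k}E_{i,l}-(-1)^{(|i|+|j|)(|k|+|l|)}\delta_{l,i}E_{k,j}.
\]
If both terms vanish there is nothing to prove. If $j=k$, the first term has color $c(i)+c(l)$. On the other hand, $\overline{E_{i,j}}+\overline{E_{k,l}}=c(i)+c(j)+c(j)+c(l)=c(i)+c(l)$, because $2c(j)=0$ in $\ZeeII^{n+1}$. The two colors agree. Similarly, if $l=i$, the second term has color $c(k)+c(j)$, which equals $c(i)+c(j)+c(k)+c(i)$ for the same reason. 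Each nonzero term therefore lies in $\gk_{\eps+\eta}$, and hence so does their sum. The case $i=l$, $j=k$ simultaneously causes no trouble, since both terms are handled separately.

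The argument has no real obstacle; the only subtle point is bookkeeping. I must define $v_k=0$ for $k<0$ so that even indices contribute nothing, and I must note that reduction mod $2$ makes repeated odd indices cancel. This cancellation is exactly why the grading is a refinement of parity: summing the coordinates of $\overline{E_{i,j}}$ mod $2$ recovers $|i|+|j|$, the usual parity. I would add a remark recording this compatibility, since the later contravariance argument (Proposition~\ref{prop:sign}) will use the colored signs.
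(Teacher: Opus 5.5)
Your proposal is correct and follows the paper's own argument: reduce to matrix units, expand $[E_{i,j},E_{k,l}]=\delta_{j,k}E_{i,l}-(-1)^{(|i|+|j|)(|k|+|l|)}\delta_{i,l}E_{k,j}$, and observe that the repeated index contributes $2c(j)=0$ (resp.\ $2c(i)=0$) in $\ZeeII^{n+1}$. Your closing remark about recovering the parity by summing coordinates mod $2$ is the same refinement-of-parity observation the paper records right after the lemma.
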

\begin{proof}
    It suffices to consider the elementary matrices $E_{i,j}$. Let $\overline{E_{i,j}} = \eps$ and $\overline{E_{k,l}} = \eta$. The supercommutator says
    \begin{equation} \label{eqn:supCom}
        [E_{i,j}, E_{k,l}] = \delta_{j,k}E_{i,l}-(-1)^{(|i|+|j|)(|k|+|l|)} \delta_{i,l}E_{k,j}.
    \end{equation}
    When $j=k$ the first term is non-zero, and we see that 
    \[
    \eps + \eta = |i| v_{i-(m+1)} +2|j|v_{j-(m+1)} + |l| v_{l-(m+1)} = |i| v_{i-(m+1)} + |l| v_{l-(m+1)} = \overline{E_{i,l}}.
    \]
    Similarly, for the second term when $i=l$, we get $\eps + \eta =  \overline{E_{k,j}}$.
    Thus the right side of \eqn{\ref{eqn:supCom}} always has multi-color degree $\eps+\eta$, proving the claim. 
\end{proof}

\begin{rmk}
    The above multi-color grading is a refinement of the $\ZeeII$-parity. Indeed, the latter is recovered by the map $\ZeeII^{n+1} \rightarrow \ZeeII: \eta \mapsto |\eta| \pmod{2}$. In the sense of \cite{Scheunert79}, this gives a $\ZeeII^{n+1}$-graded $\pi$-Lie algebra structure where $\pi: \ZeeII^{n+1} \rightarrow \C^\times$ is the character given by $ \eta \mapsto (-1)^{|\eta|}$; compare \cite{Price97}.
\end{rmk}

We may also consider the multi-color grading on our module $\Pk^D$.
Let $x^\alpha\xi^\eps \in \Pk^D$. We set
\[
\overline{x^\alpha\xi^\eps} := (\eps_0, \dots, \eps_n) \in \ZeeII^{n+1}.
\]
For the action of $E_{i,j}$ on these monomials, it is not hard to see that
\[
\overline{E_{i,j}.x^\alpha\xi^\eps} = \overline{E_{i,j}} + \overline{x^\alpha\xi^\eps}.
\]
By Lemma~\ref{lem:colorSign}, this also extends to all of $\gk$.
Now we are ready to introduce a proper generalization of \cite[Eq.~(4.2)]{I12}.
For $\eps, \eta \in \ZeeII^{n+1}$, we write $\eps \cdot \eta = \sum_{i=0}^n \eps_{i}\eta_{i}$. 
Recall $\varphi$ from Subsection~\ref{subsec:LSA}.

\begin{prop}\label{prop:sign}
For all homogeneous $X\in \gk$ and $u\in \Pk^D$, and $v\in \Pk^D$, we have 
    \begin{equation} 
    \langle X . u,\, v \rangle
  = (-1)^{|X|\overline{X}\cdot \overline{u}}\langle u, \varphi(X). v \rangle.
\end{equation}
\end{prop}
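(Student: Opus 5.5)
The plan is to reduce the identity to elementary matrices acting on basis monomials, and then check it case by case using Lemma~\ref{lem:phiCalc} and the explicit action \eqn{\ref{eqn:EijAct}}.

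\emph{Reduction.} Both sides are linear in $X$ within a fixed multi-color component $\gk_\eps$. On such a component $|X|$ and $\overline{X}$ are constant, and so is the sign. So it suffices to take $X=E_{i,j}$. Both sides are bilinear in $(u,v)$, and the sign depends only on $\overline{u}$, so we may also take $u=x^\alpha\xi^\eps$ and $v=x^\beta\xi^\eta$ with $(\alpha,\eps),(\beta,\eta)\in\A^D$.

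The form \eqn{\ref{eqn:defProd}} is diagonal in monomials. By \eqn{\ref{eqn:EijAct}}, $E_{i,j}.u$ is a scalar multiple of a single monomial. By Lemma~\ref{lem:phiCalc}, $\varphi(E_{i,j})$ is a scalar multiple of $E_{j,i}$. Hence both sides vanish unless $v$ is, up to scalar, the monomial $E_{i,j}.u$. In that case $u$ is, up to scalar, $E_{j,i}.v$. When one side vanishes for parity reasons (some $\xi^2=0$, or an exponent $\alpha_j=0$), the other side vanishes as well, because the relevant exponents of $u$ and $v$ are matched. It then remains to compare the two nonzero scalars in four cases, according to the block of $E_{i,j}$.

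\emph{Even blocks.} For $X=A_{i,j}$ the sign is trivial since $|X|=0$. With $v=x^{\alpha+u_i-u_j}\xi^\eps$, the left side is
\[
\alpha_j\,\frac{(\alpha+u_i-u_j)!}{\pt^{\alpha+u_i-u_j}\qt^\eps}\,\theta^{|\alpha|}\kappa^{|\eps|}
= (\alpha_i+1)\frac{\pt_j}{\pt_i}\,\frac{\alpha!}{\pt^\alpha\qt^\eps}\,\theta^{|\alpha|}\kappa^{|\eps|},
\]
which is exactly $\langle u,\tfrac{\pt_j}{\pt_i}A_{j,i}.v\rangle$.

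For $X=D_{i,j}$ the same scalar computation works with $\qt$ in place of $\pt$. The only extra point is the sign. One must check that
\[
(-1)^{s_j(\eps)+s_i(\eps-v_j)} = (-1)^{s_i(\eta)+s_j(\eta-v_i)} \quad\text{for } \eta=\eps+v_i-v_j,
\]
which is precisely Lemma~\ref{lem:leibSign} with the roles of $i$ and $j$ exchanged.

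\emph{Odd blocks.} For $X=B_{i,j}=x_i\partial_{\xi_j}$ we have $|X|=1$ and $\overline{X}=v_j$, so the sign is $(-1)^{\eps_j}$. The action is nonzero only when $\eps_j=1$, and then the sign equals $-1$. This matches the minus sign in $\varphi(B_{i,j})=-\frac{\theta\qt_j}{\kappa\pt_i}C_{j,i}$.

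The internal signs also agree. On the left we get $(-1)^{s_j(\eps)}$. On the right, $C_{j,i}$ acts on $x^{\alpha+u_i}\xi^{\eps-v_j}$ with sign $(-1)^{s_j(\eps-v_j)}=(-1)^{s_j(\eps)}$. For the scalars, moving from $u$ to $v$ changes $|\alpha|$ by $+1$ and $|\eps|$ by $-1$. The resulting factor $\theta\kappa^{-1}\qt_j\pt_i^{-1}$ in the norms is exactly absorbed by the coefficient in $\varphi(B_{i,j})$, with $\alpha_i+1$ matching $(\alpha+u_i)_i$.

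For $X=C_{i,j}=\xi_i\partial_{x_j}$ we have $\overline{X}=v_i$. The action is nonzero only when $\eps_i=0$, so the sign is $+1$, consistent with $\varphi(C_{i,j})$ carrying no minus sign. The internal sign $(-1)^{s_i(\eps)}$ is reproduced by $B_{j,i}$ acting on $\xi^{\eps+v_i}$, and the scalars match as in the $B$ case.

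\emph{Expected obstacle.} I expect the main obstacle to be purely bookkeeping: confirming the Koszul signs in the odd cases and in the $D$ case. The approach is to keep track of $s_k$, using that $s_k$ ignores index $k$ itself. Once the signs are settled, the scalar factors follow mechanically from Lemma~\ref{lem:phiCalc}.
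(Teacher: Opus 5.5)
Your proposal is correct and takes essentially the same route as the paper: you reduce to $E_{i,j}$ acting on monomials, then check the $A$, $B$, $C$, $D$ blocks case by case, using Lemma~\ref{lem:phiCalc} for the coefficients of $\varphi(E_{i,j})$ and Lemma~\ref{lem:leibSign} for the sign in the $D$ block. One small point: your $A$-block identity $\alpha_j(\alpha+u_i-u_j)!=(\alpha_i+1)\alpha!$ holds only for $i\neq j$, so the diagonal case should be treated separately, as the paper does; it is trivial, since $E_{i,i}$ is even, acts by a scalar on monomials, and is fixed by $\varphi$.
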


\begin{proof}
    It suffices to check the action of $X = E_{i,j}$ on monomials $x^\alpha\xi^\eps$ when the form is non-vanishing. When $i=j$, the equation follows from $|X|=0$ and that $\hk$ is invariant under $\varphi$.
    When $i\neq j$, we proceed case by case with our ``$A, B, C, D$'' notation introduced in Subsection~\ref{subsec:LSA}. 

    When $X = A_{i,j}$, the left side is
    \begin{align*}
        \langle A_{i,j}. x^\alpha \xi^\eps, x^{\alpha+u_i-u_j}\xi^{\eps} \rangle &=  \alpha_j\langle x^{\alpha+u_i-u_j}\xi^{\eps } , x^{\alpha+u_i-u_j}\xi^{\eps} \rangle \\
        &=  \frac{(\alpha+u_i)!}{\pt^\alpha \qt^\eps \pt_i \pt^{-1}_j} \theta^{|\alpha|}\kappa^{|\eps|},
    \end{align*}
    while the right side is
    \begin{align*}
        \langle  x^\alpha \xi^\eps, \varphi(A_{i,j}).x^{\alpha+u_i-u_j}\xi^{\eps} \rangle &= \frac{\pt_j}{\pt_i}\langle  x^{\alpha}\xi^{\eps } , A_{j,i}.x^{\alpha+u_i-u_j}\xi^{\eps} \rangle \\
        &=  \frac{\pt_j}{\pt_i} (\alpha_{i}+1)\langle  x^{\alpha}\xi^{\eps } , x^{\alpha}\xi^{\eps} \rangle \\
        &=\frac{\pt_j}{\pt_i} \frac{(\alpha+u_i)!}{\pt^\alpha \qt^\eps } \theta^{|\alpha|}\kappa^{|\eps|},
    \end{align*}
    thus equal.
    
    When $X = B_{i,j}$, which acts by $x_i\partial_{\xi_j}$, we assume $\eps_j = 1$; otherwise the identity is vacuously true. The left side is
    \begin{align*}
        \langle B_{i,j}. x^\alpha \xi^\eps, x^{\alpha+u_i}\xi^{\eps-v_j} \rangle = (-1)^{s_j(\eps)} \frac{(\alpha+u_i)!}{\pt^\alpha \qt^\eps \pt_i \qt^{-1}_j} \theta^{|\alpha|+1}\kappa^{|\eps|-1},
    \end{align*}
    while for $\eps_j = 1$ the sign on the right side is $(-1)^{|B_{i,j}|\overline{B_{i,j}}\cdot \overline{x^\alpha \xi^\eps}} = (-1)^{1( 1\cdot 1)} = -1$. Hence the right side becomes
    \begin{align*}
    -\langle x^\alpha \xi^\eps, \varphi(B_{i,j}).x^{\alpha+u_i}\xi^{\eps-v_j} \rangle &=  \frac{\theta\qt_j}{\kappa\pt_i}  \langle  x^{\alpha}\xi^{\eps} , C_{j,i}.x^{\alpha+u_i}\xi^{\eps-v_j} \rangle \\
    %&= \frac{\theta\qt_j}{\kappa\pt_i} (\alpha_i+1)(-1)^{s_j(\eps)} \langle  x^{\alpha}\xi^{\eps} , x^{\alpha}\xi^{\eps} \rangle \\
    & =  \frac{\theta\qt_j}{\kappa\pt_i} (-1)^{s_j(\eps)}\frac{(\alpha+u_i)!}{\pt^\alpha \qt^\eps } \theta^{|\alpha|}\kappa^{|\eps|},
    %\\
    %& =  \langle B_{i,j}. x^\alpha \xi^\eps, x^{\alpha+u_i}\xi^{\eps-v_j} \rangle.
    \end{align*}
    equal to the left side.
    
    When $X = C_{i,j}$, which acts by $\xi_i \partial_{x_j}$, we similarly assume $\eps_i = 0$. The left side is
    \begin{equation*}
        \langle C_{i,j}. x^\alpha \xi^\eps, x^{\alpha-u_j}\xi^{\eps+v_i} \rangle = (-1)^{s_i(\eps)} \alpha_j \frac{(\alpha-u_j)!}{\pt^\alpha \qt^\eps \pt^{-1}_j \qt_i} \theta^{|\alpha|-1}\kappa^{|\eps|+1},
    \end{equation*}
    while for $\eps_i = 0$ the sign on the right side is $(-1)^{|C_{i,j}|\overline{C_{i,j}}\cdot \overline{x^\alpha \xi^\eps}} = (-1)^{1( 1 \cdot 0)} = 1$, 
    so as in the previous case, we have
    \[
    \langle x^\alpha \xi^\eps, \varphi(C_{i,j}).x^{\alpha - u_j}\xi^{\eps +v_i} \rangle = \frac{\kappa\pt_j}{\theta\qt_i} (-1)^{s_i(\eps)}\frac{\alpha!}{\pt^\alpha \qt^\eps } \theta^{|\alpha|}\kappa^{|\eps|} = \langle C_{i,j}. x^\alpha \xi^\eps, x^{\alpha-u_j}\xi^{\eps+v_i} \rangle.
    \]
    % \begin{align*}
    % \langle x^\alpha \xi^\eps, \varphi(C_{i,j}).x^{\alpha - u_j}\xi^{\eps +v_i} \rangle &=  \frac{\kappa\pt_j}{\theta\qt_i}  \langle  x^{\alpha}\xi^{\eps} , B_{j,i}.x^{\alpha - u_j}\xi^{\eps +v_i} \rangle \\
    % &= \frac{\kappa\pt_j}{\theta\qt_i} (-1)^{s_i(\eps)} \langle  x^{\alpha}\xi^{\eps} , x^{\alpha}\xi^{\eps} \rangle \\
    % & =  \frac{\kappa\pt_j}{\theta\qt_i} (-1)^{s_i(\eps)}\frac{\alpha!}{\pt^\alpha \qt^\eps } \theta^{|\alpha|}\kappa^{|\eps|} \\
    % & =  \langle C_{i,j}. x^\alpha \xi^\eps, x^{\alpha-u_j}\xi^{\eps+v_i} \rangle.
    % \end{align*}

    Finally when $X = D_{i,j}$, which acts by $\xi_i\partial_{\xi_j}$, we assume $\eps_i=0$ and $\eps_j = 1$. 
    The left side is 
    \begin{equation*}
        \langle D_{i,j}. x^\alpha \xi^\eps, x^{\alpha}\xi^{\eps+v_i-v_j} \rangle = (-1)^{s_j(\eps)+s_i(\eps-v_j)}  \frac{\alpha!}{\pt^\alpha \qt^\eps \qt^{-1}_j \qt_i} \theta^{|\alpha|}\kappa^{|\eps|}.
    \end{equation*}
    The sign on the right side is now $1$ since $|D_{i,j}|=0$. So the right side is
    \begin{align*}
    \langle x^\alpha \xi^\eps, \varphi(D_{i,j}).x^{\alpha}\xi^{\eps+v_i-v_j} \rangle &=  \frac{\qt_j}{\qt_i}  \langle  x^{\alpha}\xi^{\eps} , D_{j,i}.x^{\alpha}\xi^{\eps+v_i-v_j} \rangle \\
    %&= \frac{\qt_j}{\qt_i} (-1)^{s_i(\eps+v_i-v_j)+s_j(\eps-v_j)} \langle  x^{\alpha}\xi^{\eps} , x^{\alpha}\xi^{\eps} \rangle \\
    & =  \frac{\qt_j}{\qt_i}  (-1)^{s_i(\eps+v_i-v_j)+s_j(\eps-v_j)} \frac{\alpha!}{\pt^\alpha \qt^\eps } \theta^{|\alpha|}\kappa^{|\eps|},
    %\\
    %& =  \langle D_{i,j}. x^\alpha \xi^\eps, x^{\alpha}\xi^{\eps+v_i-v_j} \rangle,
    \end{align*}
    and is equal to the left side by Lemma~\ref{lem:leibSign}.

    The proposition is now established. 
\end{proof}

Now we are ready to show Proposition~\ref{prop:tForm}.

\begin{proof}[Proof of Proposition~\ref{prop:tForm}]
    Let $|\eps| = |\eta| = d$. Note that the form in \eqn{\ref{eqn:defProd}} is the product of the even part involving $x_i$'s and the form defined on the odd $\xi_j$'s:
    \begin{equation}\label{eqn:oddForm}
        \langle \xi^\eps, \xi^\eta \rangle = \delta_{\eps, \eta} \frac{\kappa^{d}}{\tilde{q}^\eps}.
    \end{equation}
    For the even part, see \cite[Lemma~4.1]{I12}. It suffices to show that \eqn{\ref{eqn:oddForm}} implies
    \begin{equation}\label{eqn:oddTForm}
        \langle \xit^\eps, \xit^\eta \rangle = \delta_{\eps, \eta} \frac{\tilde{\kappa}^{d}}{q^\eps}.
    \end{equation}
    Suppose $\eps \neq \eta$. By Proposition~\ref{prop:tEijAct}, $\xit^\eps$ and $\xit^\eta$ lie in distinct weight spaces of $\hkt$. Since the form is contravariant in the sense of Proposition~\ref{prop:sign} and since every $H\in\hkt$ has parity $0$ and satisfies $\varphi(H)=H$, it follows that the left side of \eqn{\ref{eqn:oddTForm}} must vanish.

    Suppose instead $\eps = \eta$. 
    %For an index set $I = \{i_k: i_0<\cdots< i_{a}\}$, and a $(a+1)$-tuple $\alpha$ we write $\alpha_I$ for $\alpha_{i_0} \cdots \alpha_{i_a}$. 
    Using $\xit_i = \sum \xi_k S_{ki}$ and calculating directly, we see that $\langle \xit_i, \xit_j \rangle = \delta_{ij}{\kappat}/{q_i}$.
    The form in \eqn{\ref{eqn:oddForm}} can also be obtained by first setting $\langle \xi_i,\xi_j \rangle =\delta_{i,j} \kappa/ \qt_i$ then extending to the exterior algebra $\bigwedge^d [\xi_j]$ by $\langle u_0 \cdots u_{d-1},v_0 \cdots v_{d-1} \rangle := \det (\langle u_i ,v_j\rangle)$ for $u_i, v_j\in \Spn \{\xi_j\}$.
    Thus, for $\xit_{[d-1]} = \xit_0\cdots \xit_{d-1}$, we have
    \begin{equation} \label{eqn:base}
        \langle \xit_{[d-1]}, \xit_{[d-1]}  \rangle = \det (\langle \xit_i ,\xit_j\rangle) = \frac{\kappat^d}{q_{[d-1]}}.
    \end{equation}
    Now using $\varphi(\tilde{D}_{i,j})$ from Lemma~\ref{lem:phiCalc} and $|\tilde{D}_{i,j}|=0$ in Proposition~\ref{prop:sign},  we have
    \[
    \langle \tilde{D}_{i,j}.\xit^\eps, \xit^{\eps+v_i-v_j} \rangle = \frac{q_j}{q_i}\langle \xit^\eps, \tilde{D}_{j,i}.\xit^{\eps+v_i-v_j} \rangle,
    \]
    where we assume $\eps_i = 0$ and $\eps_j=1$. A direct computation using the definition and Lemma~\ref{lem:leibSign} shows
    \begin{equation} \label{eqn:induc}
        \langle \xit^{\eps+v_i-v_j}, \xit^{\eps+v_i-v_j} \rangle = \frac{q_j}{q_i}\langle \xit^\eps, \xit^{\eps} \rangle.
    \end{equation}
    Inductively, \eqn{\ref{eqn:base}} and \eqn{\ref{eqn:induc}} allow us to compute the squared norm of all other homogeneous $\xit^\eps$ with $|\eps|=d$ starting from $\xit_{[d-1]}$, proving \eqn{\ref{eqn:oddTForm}}.
\end{proof}

We also offer an alternative proof using the Cauchy--Binet formula.

\begin{proof}[Alternative proof of Proposition~\ref{prop:tForm}]
    As in the first proof, it suffices to show \eqn{\ref{eqn:oddTForm}} for $\eps = \eta$. Set $J = \{j\in [n]: \eps_j = 1\}$. We see that 
    \[
    \xit^\eps = \kappat^d \prod_{j\in J}\sum_{k=0}^n \qt_k v_{jk}\xi_k = \kappat^d \sum_{|I|=d} \det(V\Qt)_{J,I} \xi^{\eps(I)},
    \]
    where the sum runs over all possible index subsets and $\eps(I)_j = 1$ if and only if $j\in I$. Thus by the orthogonality of the monomial basis with respect to \eqn{\ref{eqn:defProd}}, we have
    \begin{equation} \label{eqn:mid}
        \langle \xit^\eps, \xit^\eps \rangle = \kappat^{2d} \sum_{|I|=d} \left(\det(V\Qt)_{J,I} \right)^2 \kappa^d\qt^{-\eps(I)}.
    \end{equation}
    Recall that $\Qt = \diag (\qt_j)$. Thus, 
    \begin{align*}
        \sum_{|I|=d} \left(\det(V\Qt)_{J,I} \right)^2 {\qt^{-\eps(I)}} &=  \sum_{|I|=d} \det(V\Qt)_{J,I} \det(V)_{J,I} \\
        &= \sum_{|I|=d} \det(V\Qt)_{J,I} \det(V\Trn)_{I,J} \\
        &= \det(V\Qt V\Trn)_{J, J},
    \end{align*}
    where the last equality follows from the Cauchy--Binet formula (see, e.g., \cite{ThMat}). Since $V\Qt V\Trn = q_0Q^{-1}$, this value is $q_0^d q^{-\eps}$. Substituting into \eqn{\ref{eqn:mid}} and using $\kappa \kappat = q_0 ^{-1}$ proves \eqn{\ref{eqn:oddTForm}}.
\end{proof}

\section{Orthogonality and difference equations} \label{sec:thms}
In this section, we state and prove the main results using the theory developed above. In particular, we show that the super Krawtchouk polynomials occur as entries in transition matrices between bases for the Lie superalgebra modules $\Pk^D$, give an extended form of the classical orthogonality result, and finally discuss an odd parallel of the bispectral difference equations in \cite{I12}. 

\subsection{Orthogonality}
Recall that on $\Pk^D$ we defined two monomial bases $\{x^\alpha \xi^\eps\}$ and $\{\xt^\alpha \xit^\eps\}$, which are respectively $\hk$- and $\hkt$-weight bases, and showed that with respect to the bilinear form \eqn{\ref{eqn:defProd}} they are both orthogonal. 
Parallel to \cite[Theorem~5.1]{I12}, we have the following proposition. Recall $\kP\ev$, $\kP\od$, $\kP$ and the tuples $\K = (p, \pt, U)$ and $\Lambda = (q, \qt, V)$ from Subsection~\ref{subsec:KP}.

\begin{prop} \label{prop:transOdd}
Let $\eps, \epst\in \ZeeII^{n+1}$ with $|\eps| = |\epst|=d$. Write $\kP\od(\eps, \epst) = \kP\od(\eps, \epst; \Lambda, d)$. We have
    \[
    \xit^{\epst} = \kappat^d d! \sum_{|\eps|=d} \kP\od(\eps, \epst) \qt^\eps \xi^\eps, \quad \xi^{\eps} = \kappa^d d! \sum_{|\epst|=d} \kP\od(\eps, \epst) q^{\epst} \xit^{\epst}.
    \]
\end{prop}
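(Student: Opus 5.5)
The plan is to derive the first identity directly from the generating function \eqn{\ref{eqn:oddPower}}, and then obtain the second from the first using orthogonality (Proposition~\ref{prop:tForm}) instead of inverting the matrix by hand.

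\emph{First identity.} By \eqn{\ref{eqn:cov}}, $\xit_j = \kappat\sum_k \qt_k v_{j,k}\xi_k$. For $\epst$ with $|\epst|=d$, the product $\xit^{\epst}$ in normal order equals
\[
\kappat^d\prod_{i=0}^n\Big(\sum_{k=0}^n v_{i,k}\,(\qt_k\xi_k)\Big)^{\epst_i}.
\]
Substitute $\xi_k\mapsto \qt_k\xi_k$ in \eqn{\ref{eqn:oddPower}}; this is a linear substitution that preserves the anticommutation relations. The right side then becomes $d!\sum_\eps \kP\od(\eps,\epst)\,\qt^\eps\xi^\eps$, because each monomial $\xi^\eps$ picks up the factor $\prod_k\qt_k^{\eps_k}=\qt^\eps$. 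This gives the first formula.

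\emph{Second identity.} Write $\xi^\eps=\sum_{\epst} c_{\eps,\epst}\,\xit^{\epst}$. Since the $\xit^{\epst}$ are orthogonal by Proposition~\ref{prop:tForm}, pairing with $\xit^{\epst}$ gives
\[
c_{\eps,\epst} = \frac{\langle \xi^\eps,\xit^{\epst}\rangle}{\langle \xit^{\epst},\xit^{\epst}\rangle} = \frac{q^{\epst}}{\kappat^d}\,\langle \xi^\eps,\xit^{\epst}\rangle .
\]
Next, expand $\xit^{\epst}$ by the first identity and use \eqn{\ref{eqn:oddForm}}:
\[
\langle \xi^\eps,\xit^{\epst}\rangle = \kappat^d d!\,\kP\od(\eps,\epst)\,\qt^\eps\cdot \frac{\kappa^d}{\qt^\eps} = \kappat^d\kappa^d d!\,\kP\od(\eps,\epst).
\]
Hence $c_{\eps,\epst}=\kappa^d d!\,\kP\od(\eps,\epst)\,q^{\epst}$, which is the claim.

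The main point to check is whether the form is symmetric enough to take the pairing in either slot. It is: both bases are orthogonal, and the form on monomials is diagonal with scalar entries, so bilinearity is all that is used. The only other things to verify are that the products in $\xit^{\epst}$ and in \eqn{\ref{eqn:oddPower}} are taken in the same increasing index order, so no extra sign appears, and that the factor $\kappat^d$ is counted exactly once.
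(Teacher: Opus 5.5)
Your proof is correct. The first identity is essentially the paper's argument. The paper expands $\xit^{\epst}$ via \eqn{\ref{eqn:cov}} and compares the minors of $\diag(\epst)\,\kappat(\qt_j v_{ij})\diag(\eps)$ with \eqn{\ref{eqn:oddP}*}; your rescaling $\xi_k\mapsto\qt_k\xi_k$ in \eqn{\ref{eqn:oddPower}} is a tidier way to make the same comparison.

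The second identity is where you genuinely differ. The paper only says it is ``similar'', meaning the direct expansion. From \eqn{\ref{eqn:QV}} one gets $S^{-1}=\kappa QV$, so $\xi_i=\kappa\sum_k q_k v_{k,i}\xit_k$. Then the coefficient of $\xit^{\epst}$ in $\xi^\eps$ is $\kappa^d q^{\epst}\det V_{\tilde J,J}=\kappa^d q^{\epst}d!\,\kP\od(\eps,\epst)$, where $J,\tilde J$ are the supports of $\eps,\epst$. You instead deduce it from the first identity together with the two diagonal Gram matrices, \eqn{\ref{eqn:oddForm}} and Proposition~\ref{prop:tForm}. This is not circular, since neither proof of Proposition~\ref{prop:tForm} uses Proposition~\ref{prop:transOdd}.

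The trade-off is as follows. The direct route is elementary and independent of the bilinear form; there \eqn{\ref{eqn:QV}} enters through the explicit inverse of $S$. Your route avoids inverting $S$ and handling transposed minors; there \eqn{\ref{eqn:QV}} enters through Proposition~\ref{prop:tForm}, whose proof is heavier.

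Your computation also shows that, given the first identity, the second is equivalent to the odd case of Proposition~\ref{prop:tForm}. Write $\xit^{\epst}=\sum_\eps T_{\eps,\epst}\xi^\eps$, $G=\diag(\kappa^d/\qt^{\eps})$ and $\tilde G=\diag(\kappat^d/q^{\epst})$. The second identity says $T^{-1}=\tilde G^{-1}T\Trn G$, which holds exactly when $T\Trn G T=\tilde G$. This is the mechanism the paper uses to obtain Theorem~\ref{thm:B}, run in reverse.
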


\begin{proof}
    By \eqn{\ref{eqn:cov}} we have %and in parallel with the generating-function definition above, we may write
    \[
    \xit^{\epst} = \prod_{i=0}^n \left(\kappat\sum_{j=0}^n \qt_j v_{ij}\xi_j\right)^{\epst_i} = \sum_{\eps} M(\eps, \epst) \xi_0^{\eps_0}\xi_1^{\eps_1}\cdots \xi_n^{\eps_n}.
    \]
    We know from above (cf. \eqn{\ref{eqn:oddPower}}) that these coefficients are $M(\eps, \epst)=\sum_{I,J} \det B_{I,J}$ for $d\times d$ minors $B_{I,J}$ in $B = \diag(\epst) \kappat (\qt_j v_{ij}) \diag(\eps)$. Comparing with \eqn{\ref{eqn:oddP}} we must have
    \[
    M(\eps, \epst) = d! \kappat^d \qt^\eps\kP\od(\eps, \epst)
    \]
    proving the desired relation. The second identity is similar.
\end{proof}

Our first main result is as follows. 

\begin{main} \label{thm:A}
    The super Krawtchouk polynomials $\kP (\alpha, \eps, \alphat, \epst) = \kP(\alpha,\eps,\alphat,\epst; \K, \Lambda, D )$ arise, up to explicit scalars, as entries of the transition matrices between the $\hk$- and $\hkt$-weight bases for the module $\Pk^D$. Specifically, for $x^\alpha \xi^\eps$ and $\xt^\alphat \xit^\epst$ in $\Pk^D$, %letting $d := |\eps|=|\epst|$, we have
    with $(\alpha, \eps),(\alphat, \epst) \in \A^{D-d,d}$, we have 
    \begin{align}
        \xt^\alphat \xit^\epst &= \thetat^{D-d} \kappat^d D! \sum_{(\alpha, \eps)\in \A^{D-d,d}} \kP (\alpha, \eps, \alphat, \epst) \frac{\pt^\alpha \qt^\eps}{\alpha!} x^\alpha  \xi^\eps,  \label{eqn:A1}\\
        x^\alpha \xi^\eps &= \theta^{D-d} \kappa^d D! \sum_{(\alphat, \epst)\in \A^{D-d,d}} \kP (\alpha, \eps, \alphat, \epst) \frac{p^\alphat q^\epst}{\alphat!} \xt^\alphat  \xit^\epst. \label{eqn:A2}
    \end{align}
\end{main}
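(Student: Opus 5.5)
The plan is to reduce Theorem~\ref{thm:A} to its even and odd halves. The even half is \cite[Theorem~5.1]{I12}, up to relabeling, and the odd half is Proposition~\ref{prop:transOdd}. The two halves are then glued together using the factorization \eqn{\ref{eqn:mxkP}}. The point is that $x_i$ and $\xi_j$ commute, and the change of variables \eqn{\ref{eqn:cov}} is block diagonal: $\xt$ involves only the $x$'s and $\xit$ only the $\xi$'s. Hence $\xt^\alphat\xit^\epst$ is literally the product of $\xt^\alphat$, a polynomial in $x$ of degree $D-d$, with $\xit^\epst$, an element of $\bigwedge^d$. The same factorization holds for $x^\alpha\xi^\eps$.

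\textbf{Even part.} I would first establish
\[
\xt^\alphat=\thetat^{D-d}(D-d)!\sum_{|\alpha|=D-d}\kP\ev(\alpha,\alphat;\K,D-d)\frac{\pt^\alpha}{\alpha!}x^\alpha .
\]
This follows directly from \eqn{\ref{eqn:cov}} and the generating function \eqn{\ref{eqn:evenPower}} after the substitution $x_k\mapsto \pt_k x_k$. The left side becomes $\thetat^{D-d}\prod_i(\sum_k u_{ik}\pt_k x_k)^{\alphat_i}$, and the monomial $x^\alpha$ picks up the factor $\pt^\alpha$ on the right.

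\textbf{Gluing.} Multiplying this expansion by the first identity of Proposition~\ref{prop:transOdd} gives
\[
\xt^\alphat\xit^\epst=\thetat^{D-d}\kappat^d\,(D-d)!\,d!\sum \kP\ev\,\kP\od\,\frac{\pt^\alpha\qt^\eps}{\alpha!}x^\alpha\xi^\eps .
\]
Applying \eqn{\ref{eqn:mxkP}}, $(D-d)!\,d!\,\kP\ev\kP\od=D!\,\kP$, which yields \eqn{\ref{eqn:A1}}. The sum ranges over exactly $\A^{D-d,d}$, since all other coefficients vanish.

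\textbf{Inverse expansion.} For \eqn{\ref{eqn:A2}} I would run the same argument in reverse. Inverting $R$ using \eqn{\ref{eqn:PU}} gives $R^{-1}=\thetat^{-1}p_0^{-1}PU$, so that
\[
x_i=\theta\sum_k p_k u_{ki}\xt_k .
\]
The analogous computation for $S^{-1}$ gives $\xi_i=\kappa\sum_k q_k v_{ki}\xit_k$. By the duality of Remark~\ref{rmk:duality}, with $U$ replaced by $U\Trn$ and the roles of $p$ and $\pt$ swapped, the generating-function expansion gives coefficients $\kP\ev(\alphat,\alpha;\K')=\kP\ev(\alpha,\alphat;\K)$. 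This is exactly what the second identity of Proposition~\ref{prop:transOdd} records on the odd side. Combining the two factors via \eqn{\ref{eqn:mxkP}} as before gives \eqn{\ref{eqn:A2}}.

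\textbf{Alternative route and main obstacle.} One could instead derive \eqn{\ref{eqn:A2}} from \eqn{\ref{eqn:A1}} using orthogonality, that is, Proposition~\ref{prop:tForm} together with the norms in \eqn{\ref{eqn:defProd}}: pair both sides with $\xt^\alphat\xit^\epst$. The main obstacle is bookkeeping in the inverse step. One must confirm that $\theta\thetat p_0=1$ matches the normalization of $R^{-1}$, and that the even duality $\kP\ev(\alpha,\alphat;\K)=\kP\ev(\alphat,\alpha;\K')$, taken from \cite{MT04,I12}, applies in degree $D-d$ with the homogeneous variable $x_0$ included.
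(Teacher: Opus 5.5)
Your proposal is correct and follows the paper's proof. The paper likewise multiplies the even expansion of $\xt^\alphat$ by the first identity of Proposition~\ref{prop:transOdd} and absorbs the constants with \eqn{\ref{eqn:mxkP}}; the only differences are that the paper quotes the even expansion from \cite[Theorem~5.1]{I12} where you rederive it from \eqn{\ref{eqn:evenPower}} via $x_k\mapsto \pt_k x_k$, and that your derivation of \eqn{\ref{eqn:A2}} via $R^{-1}=\thetat^{-1}p_0^{-1}PU$, $S^{-1}=\kappat^{-1}q_0^{-1}QV$ and the duality of Remark~\ref{rmk:duality} spells out what the paper calls ``similar''.
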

\begin{proof}
    By \eqn{\ref{eqn:EijAct}} and Proposition~\ref{prop:tEijAct}, the bases $\{x^\alpha\xi^\eps\}$ and $\{\xt^\alphat\xit^\epst\}$ are $\hk$- and $\hkt$-weight bases. By \cite[Theorem~5.1]{I12} we have 
    \[
    \xt^{\alphat} = \thetat^{D-d} (D-d)! \sum_{|\alpha|=D-d} \kP\ev(\alpha, \alphat) \frac{\pt^\alpha}{\alpha!} x^\alpha.
    \]
    The first identity follows from multiplying this identity by the first equation in Proposition~\ref{prop:transOdd} and the definition of $\kP$ in \eqn{\ref{eqn:mxkP}}. The second identity is similar. 
\end{proof}

\begin{rmk}
    The transition matrix between the tilde basis and the non-tilde basis is block diagonal. Specifically, ignoring the scalars in Theorem~\ref{thm:A}, this matrix is given by
    \[
    \left(
    \begin{array}{c:c:c}
    \kP\ev &  &  \\ \hdashline 
     & \kP = \kP\ev \kP\od &  \\ \hdashline
     &  &  \kP\od
    \end{array}
    \right).
    \]
    % \[
    % \diag(\kP\ev|\kP|\kP\od)
    % \]
    The upper left block corresponds to the theory in \cite{I12}, depicting the change of the $x/\xt$-bases. The lower right block gives the transition between the $\xi/\xit$-bases and represents the ``purely odd'' Krawtchouk polynomials we defined here. If $D > n+1$, then lower right block vanishes as there are no pure $\xi$ or $\xit$ monomials. The middle ``mixed'' block is also block diagonal; each block size is determined by the dimension of the subspace $\{x^\alpha \xi^\eps: |\eps|=d\}$, explicitly given by
    \[
    \binom{D-d+m}{m} \binom{n+1}{d}.
    \]
\end{rmk}

We give the following corollary of Theorem~\ref{thm:A} which expresses $\kP$ explicitly as the pairing between basis monomials under the bilinear form defined in \eqn{\ref{eqn:defProd}}.

\begin{cor} \label{cor:pairing}
%Let notation be as above. We have
For $(\alpha, \eps),(\alphat, \epst) \in \A^{D-d,d}$, we have
\[
\kP(\alpha, \eps, \alphat, \epst) = \frac{p_0^{D-d}q_0^d}{D!} \langle \xt^\alphat \xit^\epst, x^\alpha \xi^\eps \rangle.
\]
\end{cor}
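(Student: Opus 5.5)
Substitute the expansion \eqn{\ref{eqn:A1}} of Theorem~\ref{thm:A} into the first slot of the pairing and use the orthogonality of the monomial basis $\{x^\beta\xi^\eta\}$ with respect to \eqn{\ref{eqn:defProd}}. Bilinearity gives
\[
\langle \xt^\alphat \xit^\epst, x^\alpha \xi^\eps \rangle = \thetat^{D-d}\kappat^d D! \sum_{(\beta,\eta)\in\A^{D-d,d}} \kP(\beta,\eta,\alphat,\epst)\frac{\pt^\beta\qt^\eta}{\beta!}\langle x^\beta\xi^\eta, x^\alpha\xi^\eps\rangle .
\]
By \eqn{\ref{eqn:defProd}} only the term $(\beta,\eta)=(\alpha,\eps)$ survives, and it contributes
\[
\kP(\alpha,\eps,\alphat,\epst)\,\frac{\pt^\alpha\qt^\eps}{\alpha!}\cdot\frac{\alpha!}{\pt^\alpha\qt^\eps}\,\theta^{|\alpha|}\kappa^{|\eps|} = \kP(\alpha,\eps,\alphat,\epst)\,\theta^{D-d}\kappa^{d},
\]
since $|\alpha|=D-d$ and $|\eps|=d$.

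Next collect the scalars. By definition $\theta=(p_0\thetat)^{-1}$ and $\kappa=(q_0\kappat)^{-1}$, so $\thetat\theta=p_0^{-1}$ and $\kappat\kappa=q_0^{-1}$. Hence
\[
\langle \xt^\alphat \xit^\epst, x^\alpha \xi^\eps\rangle = D!\,p_0^{-(D-d)}q_0^{-d}\,\kP(\alpha,\eps,\alphat,\epst).
\]
Solving for $\kP$ gives the claimed formula.

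There is no real obstacle. The only care needed is to note that the pairing is taken within $\Pk^D$, where the form is defined, and that all expansion terms with $|\eta|\neq d$ vanish anyway. As a consistency check, one could also expand $x^\alpha\xi^\eps$ in the second slot via \eqn{\ref{eqn:A2}} and use Proposition~\ref{prop:tForm}. This yields the same scalar $\thetat^{D-d}\kappat^d\theta^{D-d}\kappa^dD!\cdot\theta^{-(D-d)}\kappa^{-d}$ up to the identical factors, which confirms the normalization.
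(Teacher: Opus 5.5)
Your argument is correct and is exactly the intended one: the paper gives no separate proof of this corollary, treating it as the immediate consequence of pairing \eqn{\ref{eqn:A1}} with $x^\alpha\xi^\eps$ under the diagonal form \eqn{\ref{eqn:defProd}}, and your bookkeeping via $\thetat\theta=p_0^{-1}$ and $\kappat\kappa=q_0^{-1}$ is right. One small slip, in your optional consistency check only: pairing \eqn{\ref{eqn:A2}} using Proposition~\ref{prop:tForm} gives the scalar $\theta^{D-d}\kappa^{d}\,\thetat^{D-d}\kappat^{d}\,D!$, with no extra factor $\theta^{-(D-d)}\kappa^{-d}$, and this does agree with your main computation.
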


In the classical setting, Krawtchouk polynomials are orthogonal with respect to the multinomial weights; see, e.g., \cite[Corollary~5.3]{I12}.
The next result gives a natural extension.
%of the classical orthogonality of Krawtchouk polynomials. 

\begin{main} \label{thm:B}
    Let $(\alpha, \eps),(\alphat, \epst), (\beta, \eta), (\tilde{\beta}, \etat)\in \A^{D-d,d}$. The super Krawtchouk polynomials satisfy the following orthogonality relations
    \begin{align*}
        \sum_{(\alpha, \eps)\in \A^{D-d,d}} \pt^\alpha\qt^\eps \frac{D!}{\alpha!} \kP(\alpha,\eps, \alphat, \epst)\kP(\alpha, \eps, \tilde{\beta}, \etat)  = \delta_{\alphat, \tilde{\beta}} \delta_{\epst, \etat} \frac{p_0^{D-d} q_0^d}{D!} \frac{\alphat!}{p^\alphat q^\epst}, \\
        \sum_{(\alphat, \epst)\in \A^{D-d,d}} p^\alphat q^\epst \frac{D!}{\alphat!} \kP(\alpha,\eps, \alphat, \epst)\kP( {\beta}, {\eta}, \alphat, \epst)  = \delta_{\alpha, {\beta}} \delta_{\eps, {\eta}} \frac{p_0^{D-d}q_0^d}{D!} \frac{\alpha!}{\pt^\alpha \qt^\eps}.
    \end{align*}
\end{main}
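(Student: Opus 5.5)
The plan is to compute a single pairing in two ways: once with the orthogonality of the monomial basis $\{x^\alpha\xi^\eps\}$ from \eqn{\ref{eqn:defProd}}, and once with the orthogonality of the tilde basis from Proposition~\ref{prop:tForm}. Comparing the two results gives each relation. Theorem~\ref{thm:A} supplies the expansions that connect the two computations.

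For the first relation, I would expand $\langle \xt^\alphat\xit^\epst, \xt^{\tilde\beta}\xit^{\etat}\rangle$ by substituting \eqn{\ref{eqn:A1}} for both arguments. Because the monomial basis is orthogonal, the double sum collapses to a diagonal sum over $(\alpha,\eps)\in\A^{D-d,d}$. The resulting expression is
\[
\thetat^{2(D-d)}\kappat^{2d}(D!)^2\sum_{(\alpha,\eps)}\kP(\alpha,\eps,\alphat,\epst)\kP(\alpha,\eps,\tilde\beta,\etat)\Bigl(\frac{\pt^\alpha\qt^\eps}{\alpha!}\Bigr)^2\frac{\alpha!\,\theta^{D-d}\kappa^d}{\pt^\alpha\qt^\eps}.
\]
Next I would apply the normalizations $\theta\thetat=p_0^{-1}$ and $\kappa\kappat=q_0^{-1}$, which hold by the definitions of $\theta$ and $\kappa$. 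With these, the expression simplifies to
\[
\thetat^{D-d}\kappat^{d}\,p_0^{-(D-d)}q_0^{-d}(D!)^2\sum_{(\alpha,\eps)}\frac{\pt^\alpha\qt^\eps}{\alpha!}\kP(\alpha,\eps,\alphat,\epst)\kP(\alpha,\eps,\tilde\beta,\etat).
\]
On the other hand, Proposition~\ref{prop:tForm} says the same pairing equals $\delta_{\alphat,\tilde\beta}\delta_{\epst,\etat}\,\alphat!\,\thetat^{D-d}\kappat^d/(p^\alphat q^\epst)$. Equating the two expressions and cancelling $\thetat^{D-d}\kappat^d$ gives the sum of $\frac{\pt^\alpha\qt^\eps}{\alpha!}\kP\kP$. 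Multiplying through by $D!$ then yields exactly the first stated identity.

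The second relation follows symmetrically. I would pair $\langle x^\alpha\xi^\eps, x^\beta\xi^\eta\rangle$ and substitute \eqn{\ref{eqn:A2}} for both arguments. This time the tilde-basis orthogonality of Proposition~\ref{prop:tForm} collapses the double sum to a diagonal sum over $(\alphat,\epst)$. That sum is then compared with the value given directly by \eqn{\ref{eqn:defProd}}, namely $\delta_{\alpha,\beta}\delta_{\eps,\eta}\,\alpha!\,\theta^{D-d}\kappa^d/(\pt^\alpha\qt^\eps)$. The same cancellations of $\theta\thetat$ and $\kappa\kappat$ apply. Alternatively, the second identity can be read off from the first by the duality of Remark~\ref{rmk:duality}.

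Some care is needed about which index sets the sums run over. Since $\kP$ vanishes unless $|\eps|=|\epst|$, the expansions in Theorem~\ref{thm:A} only involve the block $\A^{D-d,d}$, so no cross terms between different values of $d$ arise. There is no real obstacle in this argument. The only delicate part is the bookkeeping of the scalar factors: the powers of $\theta,\thetat,\kappa,\kappat$, the two factors of $D!$, and the exponent $D-d$ on the even part versus $d$ on the odd part. These must combine to produce precisely the constant $p_0^{D-d}q_0^d/D!$ in the statement.
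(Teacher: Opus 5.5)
Your proposal is correct and is essentially the paper's argument. The paper pairs \eqn{\ref{eqn:A1}} with $\xt^{\tilde\beta}\xit^{\etat}$, evaluating the left side by Proposition~\ref{prop:tForm} and the cross terms by Corollary~\ref{cor:pairing}. You substitute \eqn{\ref{eqn:A1}} into both slots and collapse with \eqn{\ref{eqn:defProd}}, which is just Corollary~\ref{cor:pairing} unpacked; your scalar bookkeeping via $\theta\thetat=p_0^{-1}$, $\kappa\kappat=q_0^{-1}$ checks out, as does the dual argument (or Remark~\ref{rmk:duality}) for the second relation.
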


\begin{proof}
    We pair both sides in \eqn{\ref{eqn:A1}} with the monomial $\xt^{\tilde{\beta}}\xit^{\etat}$. The first identity then follows from Proposition~\ref{prop:tForm} and Corollary~\ref{cor:pairing}. Similarly the second identity follows from a dual argument, where \eqn{\ref{eqn:defProd}} is used instead of Proposition~\ref{prop:tForm}.
\end{proof}

\subsection{Difference equations}

We discuss the recurrence relations for the odd Krawtchouk polynomials in this subsection and offer an interpretation in the exterior algebra. Recall that \eqn{\ref{eqn:oddP}} gives an explicit determinant-type formula
\[
\kP\od (\eps, \epst) = \frac{1}{d!}\sum_{|I|=|J|=d} \det A_{I,J}(\eps, \epst),
\]
where $A = \diag(\epst) V \diag(\eps)$. Recall $s_j(\eps) := \sum_{k=0}^{j-1} \eps_k$.
  
We also treat $\kP\od$ as 0 when the inputs $\eps, \epst\notin \ZeeII^{n+1}$.
\begin{prop} \label{prop:rec}
    %Let the notation be as above. 
    Let $\eps, \epst\in \ZeeII^{n+1}$ be such that $|\eps|=|\epst|$. Then the odd Krawtchouk polynomial $\kP\od(\eps, \epst)$ satisfies the following recurrence relations
    \begin{align} 
    \eps_i \kP\od (\eps, \epst) &= q_0^{-1}\qt_i \sum_{0\leq k,l\leq n}q_k v_{k,i} v_{l,i}(-1)^{s_k(\epst-v_l) + s_l(\epst)}\epst_l \kP\od (\eps, \epst+v_k-v_l), \label{eqn:ith}
\\
    \epst_i \kP\od (\eps, \epst) &= q_0^{-1}q_i \sum_{0\leq k,l\leq n}\qt_k v_{i,k} v_{i,l}(-1)^{s_k(\eps-v_l) + s_l(\eps)}\eps_l \kP\od (\eps+v_k-v_l, \epst).\label{eqn:ithT}
    \end{align}
We further have
    \begin{align} 
    \eps_0 \kP\od (\eps, \epst)  = \sum_{0\leq k\leq n} q_k \epst_k\kP\od(\eps, \epst) + \sum_{0\leq k\neq l \leq n} q_k (-1)^{s_k(\epst-v_l)+s_l(\epst)} \epst_l \kP \od(\eps, \epst+v_k -v_l), \label{eqn:eigen}\\
    \epst_0 \kP\od (\eps, \epst)  = \sum_{0\leq k\leq n} \qt_k \eps_k\kP\od(\eps, \epst) + \sum_{0\leq k\neq l \leq n} \qt_k (-1)^{s_k(\eps-v_l)+s_l(\eps)} \eps_l \kP \od(\eps +v_k -v_l, \epst). \label{eqn:eigent}
    \end{align} 
\end{prop}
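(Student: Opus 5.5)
The plan is to obtain all four relations by letting Cartan elements act on the two sides of the transition formula in Proposition~\ref{prop:transOdd}, exactly as in the classical bispectral argument of \cite{I12}.

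For \eqn{\ref{eqn:ith}}, start from
\[
\xi^{\eps} = \kappa^d d! \sum_{|\epst|=d} \kP\od(\eps, \epst) q^{\epst} \xit^{\epst}
\]
and apply $D_{i,i}$ to both sides. By \eqn{\ref{eqn:EijAct}} the left side becomes $\eps_i\xi^\eps$. On the right, substitute the expansion
\[
D_{i,i} = q_0^{-1}\qt_i\sum_{k,l}q_k v_{k,i}v_{l,i}\tilde{D}_{k,l}
\]
from \eqn{\ref{eqn:tCartan}}. Then use Proposition~\ref{prop:tEijAct}: each term $\tilde D_{k,l}$ sends $\xit^{\epst}$ to
\[
(-1)^{s_l(\epst)+s_k(\epst-v_l)}\epst_l\,\xit^{\epst+v_k-v_l}.
\]
Comparing coefficients of a fixed $\xit^{\epst'}$ on both sides requires reindexing: set $\epst=\epst'-v_k+v_l$. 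The key step is to check that the sign, the factor $\epst_l$, and the weight $q^{\epst}$ transform correctly under this shift. The weight ratio is $q^{\epst'-v_k+v_l}/q^{\epst'}=q_l/q_k$. The sign must match $(-1)^{s_k(\epst'-v_l)+s_l(\epst')}$ and the indicator $\epst'_l$ after the substitution; for this, use Lemma~\ref{lem:leibSign} together with $s_j(\eps)=s_j(\eps\pm v_j)$.

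I expect the reindexing to be the main obstacle. After the shift, the coefficient naturally carries $q_l$ rather than $q_k$, and the indicator becomes $\epst'_k(1-\epst'_l)$-type rather than $\epst'_l$. To fix this I would simply interchange the dummy labels $k\leftrightarrow l$, which is harmless because $v_{k,i}v_{l,i}$ is symmetric in $k,l$. One must still verify that the indicators and the diagonal terms $k=l$ agree, and that the sign is invariant under this interchange; this is where Lemma~\ref{lem:leibSign} does the work. The alternative route avoids reindexing: pair both sides with $\xit^{\epst}$ and use contravariance (Proposition~\ref{prop:sign}, Corollary~\ref{cor:pairing}). Write
\[
\eps_i\langle \xit^\epst,\xi^\eps\rangle=\langle \xit^\epst, D_{i,i}.\xi^\eps\rangle=\langle D_{i,i}.\xit^\epst,\xi^\eps\rangle,
\]
using $\varphi(D_{i,i})=D_{i,i}$ and $|D_{i,i}|=0$. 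Then expand $D_{i,i}.\xit^\epst$ via \eqn{\ref{eqn:tCartan}} and Proposition~\ref{prop:tEijAct}. This gives \eqn{\ref{eqn:ith}} directly, with no reindexing at all, since the normalization $q_0^d/d!$ in the odd analogue of Corollary~\ref{cor:pairing} is uniform in $\epst$. I would use this pairing route.

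\eqn{\ref{eqn:ithT}} is proved the same way with $\tilde D_{i,i}$ and the first expansion in \eqn{\ref{eqn:tCartan}}. Alternatively, it follows from \eqn{\ref{eqn:ith}} by the duality of Remark~\ref{rmk:duality}, which swaps $q\leftrightarrow\qt$, $V\leftrightarrow V\Trn$ and $\eps\leftrightarrow\epst$.

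Finally, \eqn{\ref{eqn:eigen}} is the case $i=0$ of \eqn{\ref{eqn:ith}}. Using $v_{k,0}=v_{l,0}=1$ and $q_0=\qt_0$, the prefactor $q_0^{-1}\qt_0$ equals $1$, and the coefficient becomes $q_k$. Splitting off the diagonal terms $k=l$, where $\tilde D_{k,k}$ acts by $\epst_k$ with trivial sign, gives the first sum. \eqn{\ref{eqn:eigent}} follows in the same way from \eqn{\ref{eqn:ithT}}.
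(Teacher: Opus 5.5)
Your proof is correct. For \eqn{\ref{eqn:ith}} and \eqn{\ref{eqn:ithT}}, the pairing route you settle on is exactly the paper's proof: contravariance of the form under $D_{i,i}$ (resp.\ $\tilde D_{i,i}$), expansion via Lemma~\ref{lem:cartanSwap} and Proposition~\ref{prop:tEijAct}, then Corollary~\ref{cor:pairing}.

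You differ from the paper on \eqn{\ref{eqn:eigen}} and \eqn{\ref{eqn:eigent}}. You observe that they are literally the case $i=0$ of \eqn{\ref{eqn:ith}} and \eqn{\ref{eqn:ithT}}. This is legitimate: Lemma~\ref{lem:cartanSwap} holds for all $i\in[n]$, including $0$, and then $v_{k,0}=v_{l,0}=1$, $q_0=\qt_0$, and the diagonal terms carry sign $(-1)^{2s_k(\epst)}=1$. The paper instead sums \eqn{\ref{eqn:ith}} over $i=1,\dots,n$. It then uses the orthogonality relation $V\Qt V\Trn=q_0Q^{-1}$, in the form \eqn{\ref{eqn:sumLem}}, to subtract this from the total-degree identity.

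Your route is shorter and needs only the normalization of $V$. The paper's route in effect shows that the $i=0$ relation is redundant, given the relations for $i\geq 1$ and the fact that $\sum_i D_{i,i}$ acts as the degree operator. That is the form in which the relation reappears in \eqn{\ref{eqn:wedgeEigen}*}. It also parallels the classical setting of \cite{I12}, where $x_0=1$ and only the indices $i\geq 1$ are treated as independent.

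Your alternative derivation of \eqn{\ref{eqn:ithT}} from \eqn{\ref{eqn:ith}} via Remark~\ref{rmk:duality} also works. The dual data $(\qt,q,V\Trn)$ again satisfy the defining constraint, and $\qt_0=q_0$ makes the prefactors match.
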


\begin{proof}
    By Proposition~\ref{prop:sign}, for all $i$ we have
\[
\langle D_{i,i}\xi^\eps, \xit^{\epst}\rangle = \langle \xi^\eps, D_{i,i}\xit^{\epst}\rangle.
\]
Combined with \eqn{\ref{eqn:tCartan}} and Proposition~\ref{prop:tEijAct}, we compute the following
\begin{align*}
     \langle\xi^\eps,  D_{i,i} \xit^{\epst} \rangle &= q_0^{-1}\qt_i \sum_{0\leq k,l\leq n}q_k v_{k,i} v_{l,i}(-1)^{s_k(\epst-v_l) + s_l(\epst)} \epst_l \langle \xi^\eps, \xit^{\epst+v_k-v_l}\rangle \\
     & = \frac{d!}{q_0^d}  q_0^{-1}\qt_i \sum_{0\leq k,l\leq n}q_k v_{k,i} v_{l,i}(-1)^{s_k(\epst-v_l) + s_l(\epst)} \epst_l\kP\od (\eps, \epst+v_k-v_l).
\end{align*}
By Corollary~\ref{cor:pairing} and the fact that $\xi^\eps$ is an $\hk$-weight vector, we have $\langle D_{i,i}\xi^\eps, \xit^{\epst}\rangle =  \eps_i \frac{d!}{q_0^d} \kP\od (\eps, \epst)$. 
Thus \eqn{\ref{eqn:ith}} follows. By considering $\langle \tilde{D}_{i,i}\xi^\eps, \xit^{\epst}\rangle = \langle \xi^\eps, \tilde{D}_{i,i}\xit^{\epst}\rangle$, \eqn{\ref{eqn:ithT}} follows similarly.

To show \eqn{\ref{eqn:eigen}} and \eqn{\ref{eqn:eigent}}, we first use the condition $V\Qt V\Trn = q_0 Q^{-1}$ to obtain
\begin{equation} \label{eqn:sumLem}
    \sum_{i=0}^n \qt_i v_{k,i} v_{l,i} = \frac{q_0}{q_k} \delta_{kl}, \text{ and } \sum_{i=1}^n \qt_i v_{k,i} v_{l,i} = \frac{q_0}{q_k} \delta_{kl} - \qt_0.
\end{equation}
Let $d:=|\eps|=|\epst|$. Summing up \eqn{\ref{eqn:ith}} for $i = 1, \dots, n$, we get
\begin{align*}
    (d-\eps_0) \kP\od(\eps, \epst) &= q_0^{-1} \sum_{\substack{0\leq i,k,l \leq n\\ i\neq 0}} \qt_i q_k v_{k,i} v_{l,i} (-1)^{s_k(\epst-v_l)+s_l(\epst)} \epst_l \kP \od(\eps, \epst+v_k -v_l) \\
    & =q_0^{-1}\sum_{0\leq k,l\leq n} \left(\sum_{i = 1}^n \qt_i q_k v_{k,i} v_{l,i}\right ) (-1)^{s_k(\epst-v_l)+s_l(\epst)} \epst_l \kP \od(\eps, \epst+v_k -v_l) \\
    & = q_0^{-1} \left(\sum_{0\leq k \leq n} \left(q_0 - \qt_0 q_k\right )  \epst_k \kP \od(\eps, \epst) \right. \\
    + & \left. \sum_{0\leq k\neq l \leq n}  \left( - \qt_0 q_k\right )  (-1)^{s_k(\epst-v_l)+s_l(\epst)} \epst_l \kP \od(\eps, \epst+v_k -v_l) \right)\\
    & = d \kP\od (\eps, \epst) - \sum_{0\leq k \leq n} q_k \epst_k\kP\od(\eps, \epst) - \sum_{0\leq k\neq l \leq n} q_k (-1)^{s_k(\epst-v_l)+s_l(\epst)} \epst_l \kP \od(\eps, \epst+v_k -v_l).
\end{align*}
Here we used $q_0 = \qt_0$ and $s_k(\epst) =s_k(\epst-v_k)$. The above calculation gives \eqn{\ref{eqn:eigen}}. \eqn{\ref{eqn:eigent}} is similarly obtained by a tilde version of the argument. 
\end{proof}

\begin{rmk}
    Summing up \eqn{\ref{eqn:ith}} from 0 to $n$ and using \eqn{\ref{eqn:sumLem}}, we have the following tautological equation:
    \[
    d \kP\od(\eps, \epst) = q_0^{-1} \sum_{0\leq k, l \leq n} q_0\delta_{kl} (-1)^{s_k(\epst-v_l)+s_l(\epst)} \epst_l \kP \od(\eps, \epst+v_k -v_l) = d \kP\od(\eps, \epst).
    \]
    Indeed, the sum $\sum D_{i,i} \in \gl(m+1|n+1)$ acts as the total degree operator on $\xi$'s.
\end{rmk} 

We interpret these results as follows. 
Let $\tilde{A}_d$ (respectively $A_d$) denote the space of homogeneous degree $d$ polynomials linear in $\epst_j$ (respectively in $\eps_j$). 
Let $\cW = \Spn \{\xit_j:j\in [n]\} = \Spn \{\xi_j:j\in [n]\}$ and $\mathcal{F}^{(d)} := \bigwedge^d \cW$. Then $\mathcal{F}=\bigoplus_{j=0}^{n+1} \mathcal{F}^{(d)}$ is also known as the Fock space on $\cW$.
We define
\[
\tilde{\psi}: \tilde{A}_d \rightarrow \mathcal{F}^{(d)}: \sum_I c_I \epst_I \mapsto \sum_I c_I \xit_I, \quad {\psi}: A_d \rightarrow \mathcal{F}^{(d)}: \sum_I c_I \eps_I \mapsto \sum_I c_I \xi_I, 
\] 
where $I$ ranges over the strictly increasing index subsets of $[n]$ of size $d$, and $c_I \in \C$. Both are clearly linear isomorphisms.

Consider the action of the Cartan elements $D_{i,i}$ and $\tilde{D}_{i,i}$ on $\mathcal{F}^{(d)}$. Via $\rho$ the elements $D_{i,i}$ and $\tilde{D}_{i,i}$ act as the degree operators $\xi_i \partial_{\xi_i}$ and $\xit_i \partial_{\xit_i}$ respectively. 
By Lemma~\ref{lem:cartanSwap}, on $\mathcal{F}^{(d)}$ we have
\begin{equation} \label{eqn:diffOp}
    D_{i,i} = q_0^{-1} \qt_i \sum_{0\leq k, l \leq n} q_k v_{k,i} v_{l,i} \xit_k \partial_{\xit_l}, \quad \tilde{D}_{i,i} = q_0^{-1} q_i \sum_{0\leq k, l \leq n} \qt_k v_{i,k} v_{i,l} \xi_k \partial_{\xi_l}.
\end{equation}
Using \eqn{\ref{eqn:tEijAct}} we obtain the matrix entries of $\xit_k \partial_{\xit_l}$ with respect to the $\{\xit^\epst:|\epst|=d\}$ basis:
\[
\xit_k \partial_{\xit_l} (\xit^\epst) = \sum_{\etat} D_{\etat, \epst}^{k,l} \xit^\etat, \quad \text{where }  D_{\etat, \epst}^{k,l} = \epst_l (-1)^{s_k(\epst-v_l)+s_l(\epst)}\delta_{\etat, \epst+v_k -v_l},
\]
and it follows that $D_{\etat, \epst}^{k,l} = D_{\epst, \etat}^{l,k}$. 
Denote the transpose of $X$ with respect to the basis $\{\xit^\epst\}$ (respectively $\{\xi^\eps\}$) by $X^{\mkern-2mu\mathsf{t}, \xit}$ (respectively $X^{\mkern-2mu\mathsf{t}, \xi}$), and we have
\[
(\xit_k \partial_{\xit_l})^{\mkern-2mu\mathsf{t}, \xit} = \xit_l \partial_{\xit_k}, \quad \left(\xi_k \partial_{\xi_l}\right)^{\mkern-2mu\mathsf{t}, \xi} = \xi_l \partial_{\xi_k}.
\]
Thus by \eqn{\ref{eqn:diffOp}}, explicitly,
\begin{equation}
    D_{i,i}^{\mkern-2mu\mathsf{t}, \xit} = q_0^{-1} \qt_i \sum_{0\leq k, l \leq n} q_k v_{k,i} v_{l,i} \xit_l \partial_{\xit_k}, \quad \tilde{D}_{i,i}^{\mkern-2mu\mathsf{t}, \xi} = q_0^{-1} q_i \sum_{0\leq k, l \leq n} \qt_k v_{i,k} v_{i,l} \xi_l \partial_{\xi_k}.
\end{equation}
We also write
\[
\tilde{\kP}\od^\wedge (\eps) := \tilde{\psi}(\kP\od(\eps, -)), \quad {\kP}\od^\wedge (\epst) := {\psi}(\kP\od(-, \epst)).
\]
Under this identification, we may rewrite Proposition~\ref{prop:rec} as follows.
\begin{prop} \label{prop:wedgeRec}
%Let the notation be as above. We have
For $i\in [n]$, we have
\begin{equation}\label{eqn:wedgeiTh}
    D_{i,i}^{\mkern-2mu\mathsf{t}, \xit} \tilde{\kP}\od^\wedge (\eps) =  \eps_i\tilde{\kP}\od^\wedge (\eps),\quad  \tilde{D}_{i,i}^{\mkern-2mu\mathsf{t}, \xi} {\kP}\od^\wedge (\epst) =  \epst_i{\kP}\od^\wedge (\epst).
\end{equation}
Furthermore,
\begin{equation}\label{eqn:wedgeEigen}
    \left(d-\sum_{i=1}^n D_{i,i}^{\mkern-2mu\mathsf{t}, \xit}\right) \tilde{\kP}\od^\wedge (\eps)  = \eps_0\tilde{\kP}\od^\wedge (\eps) ,\quad \left(d-\sum_{i=1}^n \tilde{D}_{i,i}^{\mkern-2mu\mathsf{t}, \xi}\right) {\kP}\od^\wedge (\epst)  = \epst_0 {\kP}\od^\wedge (\epst).
\end{equation}
    
\end{prop}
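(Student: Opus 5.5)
The proposition is a direct translation of Proposition~\ref{prop:rec} through the isomorphisms $\tilde\psi$ and $\psi$. I will handle the first identity in \eqn{\ref{eqn:wedgeiTh}} in detail; the second follows by the same argument with the tilde and non-tilde roles exchanged, or via the duality of Remark~\ref{rmk:duality}.

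Fix $\eps$. By definition,
\[
\tilde{\kP}\od^\wedge(\eps)=\sum_{|\epst|=d}\kP\od(\eps,\epst)\,\xit^{\epst},
\]
since $\kP\od(\eps,-)$ is multilinear in the $\epst_j$ and its value at $\epst=\eps(I)$ is the coefficient of $\epst_I$. I will not try to identify this polynomial coefficient with the value on $\ZeeII^{n+1}$. Instead, I use that $\tilde\psi$ is the linear isomorphism sending the indicator monomial basis to $\{\xit^\epst\}$.

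Next I apply
\[
D_{i,i}^{\mkern-2mu\mathsf{t},\xit}=q_0^{-1}\qt_i\sum_{k,l}q_kv_{k,i}v_{l,i}\,\xit_l\partial_{\xit_k}.
\]
The coefficient of $\xit^{\etat}$ in $\xit_l\partial_{\xit_k}\xit^\epst$ is $D^{l,k}_{\etat,\epst}=D^{k,l}_{\epst,\etat}$. So the coefficient of $\xit^{\epst}$ in $D_{i,i}^{\mkern-2mu\mathsf{t},\xit}\tilde{\kP}\od^\wedge(\eps)$ is
\[
q_0^{-1}\qt_i\sum_{k,l}q_kv_{k,i}v_{l,i}\sum_{\etat}D^{k,l}_{\epst,\etat}\,\kP\od(\eps,\etat).
\]
Here $D^{k,l}_{\epst,\etat}$ is nonzero only for $\etat=\epst+v_k-v_l$ (when that lies in $\ZeeII^{n+1}$), and then it equals $\epst_l(-1)^{s_k(\epst-v_l)+s_l(\epst)}$. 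The sum therefore collapses to exactly the right side of \eqn{\ref{eqn:ith}}, which equals $\eps_i\kP\od(\eps,\epst)$. This proves the first identity.

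The main point needing care is the index bookkeeping in the transpose. The relation $D^{k,l}_{\etat,\epst}=D^{l,k}_{\epst,\etat}$ must pair with the swap $\xit_k\partial_{\xit_l}\mapsto\xit_l\partial_{\xit_k}$ so that the result matches \eqn{\ref{eqn:ith}} with the sign $s_k(\epst-v_l)+s_l(\epst)$, not its mirror. I will check this directly from \eqn{\ref{eqn:tEijAct}}, using Lemma~\ref{lem:leibSign} if the signs must be rewritten. I also have to confirm that the convention $\kP\od=0$ outside $\ZeeII^{n+1}$ agrees with the factor $\epst_l$ and with $\xit^{\etat}=0$ when $\etat$ has an entry $2$.

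Finally, \eqn{\ref{eqn:wedgeEigen}} follows by summing \eqn{\ref{eqn:wedgeiTh}} over $i=1,\dots,n$ and using $\sum_{i=0}^n\eps_i=d$. This is the wedge form of \eqn{\ref{eqn:eigen}} and \eqn{\ref{eqn:eigent}}, and no further computation is needed.
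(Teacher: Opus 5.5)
Your proposal is correct and is essentially the paper's proof: expand $\tilde{\kP}\od^\wedge(\eps)=\sum_{\etat}\kP\od(\eps,\etat)\,\xit^{\etat}$, read off the matrix entries of the transposed operator, collapse the Kronecker delta to land exactly on the right side of \eqn{\ref{eqn:ith}}, and then sum over $i=1,\dots,n$ using $|\eps|=d$. There is one bookkeeping slip. The coefficient of $\xit^{\epst}$ in $\xit_l\partial_{\xit_k}\xit^{\etat}$ is $D^{l,k}_{\epst,\etat}=D^{k,l}_{\etat,\epst}$, not $D^{k,l}_{\epst,\etat}$ as you wrote; the latter is the entry for the untransposed $D_{i,i}$. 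However, the support $\etat=\epst+v_k-v_l$ and the value $\epst_l(-1)^{s_k(\epst-v_l)+s_l(\epst)}$ that you then use are those of $D^{k,l}_{\etat,\epst}$, so your final formula is right.
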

\begin{proof}
For the first identity in \eqn{\ref{eqn:wedgeiTh}}, write
\[
D_{i,i}\xit^\etat = \sum_{\epst} D_{\epst,\etat} \xit^{\epst}
\]
where $D_{\epst,\etat} = q_0^{-1} \qt_i \sum_{0\leq k, l\leq n} q_k v_{k,i} v_{l,i} (-1)^{s_k(\etat-v_l) + s_l(\etat)}\etat_l \delta_{\epst, \etat+v_k-v_l}$ by Proposition~\ref{prop:tEijAct}.
Suppose
\[
\tilde{\kP}\od^\wedge (\eps) = \sum_{\etat}c_\etat \xit^\etat.
\]
Then %we calculate that
\begin{align*}
    D_{i,i}^{\mkern-2mu\mathsf{t}, \xit} \tilde{\kP}\od^\wedge (\eps) &= \sum_{\etat, \epst} c_\etat D_{\etat, \epst} \xit^\epst \\
    &= q_0^{-1} \qt_i \sum_{\epst}  \sum_{0\leq k, l \leq n}   q_k v_{k,i} v_{l,i} (-1)^{s_k(\epst-v_l) + s_l(\epst)}\epst_l c_{\epst+v_k-v_l}\xit^\epst,
\end{align*}
which is exactly $\eps_i \tilde{\kP}\od^\wedge (\eps)$ by \eqn{\ref{eqn:ith}} with $\tilde{\psi}$ applied on both sides. 
The second identity is analogous. 
Finally, as above, summing \eqn{\ref{eqn:wedgeiTh}} for $i = 1, \dots, n$ gives \eqn{\ref{eqn:wedgeEigen}}.
\end{proof}

From the point of view of exterior algebra, the odd Krawtchouk polynomials as elements $\tilde{\kP}\od^\wedge (\eps)$ and ${\kP}\od^\wedge (\epst)$ in $\mathcal{F}^{(d)}$, become eigenvectors of naturally commuting operators $D_{i,i}^{\mkern-2mu\mathsf{t}, \xit}$ and $\tilde{D}_{i,i}^{\mkern-2mu\mathsf{t}, \xi}$ which arise from the actions of Cartan subalgebras. 
Also, the dimension of $\mathcal{F}^{(d)}$ is $\binom{n+1}{d}$, which is exactly the cardinality of $\{\kP\od(\eps,-): |\eps|=d\}$.
By orthogonality, they must be linearly independent and hence form a basis for $\mathcal{F}^{(d)}$. By applying $\tilde{\psi}^{-1}$ and $\psi^{-1}$ respectively, we see that for fixed $\eps$ or $\epst$, the odd Krawtchouk polynomials $\kP\od(\eps, -)$ and $\kP\od(-,\epst)$ form bases for $\tilde{A}_d$ and $A_d$ respectively. 

\begin{rmk}
    In our work, we use a finite Fock space framework similar to that used by Borasi in \cite{B22}, up to a harmless shift in indexing. Our operators $\xi_k\partial_{\xi_l}$ (similarly $\xit_k \partial_{\xit_l}$) can be written as $c_k^\dagger c_l$ where $c_k^\dagger$ is the creation operator given by exterior multiplication by $\xi_k$, and $c_l$ is the annihilation operator given by contraction. Borasi uses the spin representation of $\mathfrak{so}(2n+1,\mathbb C)$ to study finite-dimensional systems of free fermions and then lifts the corresponding fermionic Hamiltonian to a differential operator on the spin group, eventually relating the Euclidean-time evolution to stochastic diffusion processes on the spin group.
\end{rmk}

\section{Spherical functions} \label{sec:zonal}
In this section, we give an interpretation of the odd Krawtchouk polynomials of a fixed degree $d\in \{0, 1, \dots, n+1\}$ using the language of fermionic Fock space and (zonal) spherical functions. We assume in our initial data $\Lambda=(q, \qt, V)$ that $q$ and $\qt$ are positive real, and $V$ is real.

We first give a motivation from the physics point of view.    
Let $\cWR = \R^{n+1}$ and denote the standard basis for $\cWR$ by $\{\xi_j\}_{j = 0}^{n}$. 
The associated fermionic Fock space is
\[
\mathcal{F}=\bigoplus_{j=0}^{n+1}\bigwedge^j\cWR.
\]
If we regard $\cWR$ as the one-particle state space for a fermion with $(n+1)$ available modes, then the $d$-th component $\mathcal{F}^{(d)} = \bigwedge^d \cWR$ may be interpreted as the state space of $d$ identical fermions occupying $(n+1)$ modes.
The antisymmetry of the exterior products enforces the Pauli exclusion principle ($\xi_i\wedge \xi_i=0$) satisfied by fermions. 
As usual, we omit wedges when the anticommutativity is clear.

Given any oriented orthonormal basis $\{f_0, f_1, \dots,f_{d-1}\}$ for a subspace of $\cWR$, the wedge product $f_0f_1\cdots f_{d-1} = f_0\wedge f_1 \wedge \cdots \wedge f_{d-1}$ is sometimes called a Slater determinant (see \cite{AdS20}) in quantum physics.
Let $G := SO(n+1)$ and $K := SO(d)\times SO(n+1-d)$.
Consider the oriented Grassmannian 
\[
\oGr(n+1, d) := SO(n+1)/ (SO(d)\times SO(n+1-d)) = G/K.
\]
Such a wedge $f_0f_1\cdots f_{d-1}$ determines an oriented $d$-dimensional subspace of $\cWR$ spanned by $f_i$'s. 
Thus, any non-zero simple wedge up to a positive scalar naturally corresponds to a point in $\oGr(n+1,d)$. 

Moreover, given an element $g\in G$, its $(I,J)$-th minors appear as the Pl\"{u}cker coordinates in the following sense. View $\cWR$ as the natural $(n+1)$-dimensional representation of $G$. The action naturally induces a $G$-module structure on $\mathcal{F}$ via
\[
g.(u_0 \cdots  u_{d-1}) := (g.u_0)  \cdots  (g.u_{d-1}).
\]
For an ordered index subset $I = \{i_k:i_0<i_1 < \cdots < i_{d-1}\}$ of $[n]$, set $\xi_I := \xi_{i_0} \xi_{i_1}  \cdots  \xi_{i_{d-1}}$. Then
\begin{equation} \label{eqn:gij}
    g . \xi_J = \sum_{|I|=d} \det(g_{I,J}) \xi_I.
\end{equation}
Equivalently, for each fixed $J$, these coefficients are the Pl\"{u}cker coordinates of the oriented $d$-dimensional subspace $gP_J$, where $P_J=\Spn\{\xi_j:j\in J\}$.

For each fixed index subset $J$ with $|J|=d$, and $g\in SO(n+1)$ the vector $g.\xi_J$ in \eqn{\ref{eqn:gij}} has norm $1$ since $g\in SO(n+1)$. Hence the quantities
\begin{equation} \label{eqn:prob}
    \mathbb{P}_{I\mid J}:=|\det(g_{I,J})|^2
\end{equation}
form a probability distribution on the set of size-$d$ index subsets $\{I\subseteq [n]: |I|=d\}$. In the fermionic interpretation of $\wedge^d\cWR$, this is the probability distribution obtained by expressing the state $g.\xi_J$ in the occupation basis $\{\xi_I: |I|=d\}$, cf. \cite[Section~2]{Gottlieb07}.

We now introduce some representation-theoretic machinery. 
The pair $(G, K)$ is a Gelfand pair.
For the following result, see \cite[Section~19.2]{FH}, where explicit highest weights are given.

\begin{prop} \label{prop:FH}
    The $G$-module $\wedge^d \cWR$ is always irreducible, except when $n+1$ is even and $d = \dfrac{n+1}{2}$, in which case $\wedge^d \cWR$ is the sum of two irreducible components. 
\end{prop}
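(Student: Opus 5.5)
The plan is to pass to the complexification and use highest weight theory for $\mathfrak{so}(n+1,\C)$. The real representation $\wedge^d\cWR$ of the compact connected group $G=SO(n+1)$ complexifies to $\wedge^d\C^{n+1}$. Invariant subspaces of the connected group $G$ are the same as invariant subspaces of its Lie algebra $\mathfrak{so}(n+1)$, and hence of $\mathfrak{so}(n+1,\C)$. So it suffices to decompose $\wedge^d\C^{n+1}$ as an $\mathfrak{so}(n+1,\C)$-module. There is one subtlety: the statement is about the real module, where a complex decomposition could a priori fail to descend. I address this at the end.

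\textbf{Set-up.} Write $n+1=2r$ or $2r+1$, and choose an isotropic basis $e_1,\dots,e_r,e_{-1},\dots,e_{-r}$ (together with $e_0$ in the odd case) for the symmetric form. The Cartan subalgebra acts diagonally, with $e_{\pm i}$ of weight $\pm L_i$ and $e_0$ of weight $0$. The weights of $\wedge^d$ are then sums of $d$ distinct such vectors. By Poincaré duality via the invariant volume form, $\wedge^d\cong\wedge^{n+1-d}$, so it is enough to treat $d\le (n+1)/2$.

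\textbf{Case $d<(n+1)/2$ (or $n+1$ odd with $d\le r$).} Here $e_1\wedge\cdots\wedge e_d$ is a highest weight vector of weight $L_1+\cdots+L_d$. To get irreducibility I compute a dimension: by the Weyl dimension formula, the irreducible module $\Gamma_{L_1+\cdots+L_d}$ has dimension $\binom{n+1}{d}$. Since complete reducibility holds, this dimension count together with the existence of the highest weight vector forces $\wedge^d=\Gamma_{L_1+\cdots+L_d}$.

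A cleaner route avoids the Weyl computation. Show directly that every nonzero submodule $M$ contains $e_1\wedge\cdots\wedge e_d$: take a weight vector in $M$ and apply root operators $e_i\wedge\iota(e_{-j})-e_j\wedge\iota(e_{-i})$ and their relatives to move it. Alternatively, argue that the $\mathfrak{so}$ highest weight vectors in $\wedge^d$ are unique up to scalar. For this, use the $O(n+1)$-irreducibility of $\wedge^d$ (it is a standard fact via Weyl's first fundamental theorem, or via the transitivity of $O(n+1)$ on orthonormal frames plus the Gram-determinant inner product), and note that $O/SO=\ZeeII$ can split an $O$-irreducible module into at most two $SO$-irreducibles, which are conjugate under a reflection. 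If it splits into two, the highest weights of the two summands are exchanged by a reflection. This happens only when the weight $L_1+\cdots+L_d$ is not fixed by the outer automorphism, i.e.\ only for $n+1=2r$ and $d=r$.

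\textbf{Case $n+1=2r$, $d=r$.} Both $e_1\wedge\cdots\wedge e_r$ and $e_1\wedge\cdots\wedge e_{r-1}\wedge e_{-r}$ are highest weight vectors, of weights $L_1+\cdots+L_r$ and $L_1+\cdots+L_{r-1}-L_r$. Hence the module is reducible. Concretely, the Hodge star satisfies $*^2=(-1)^{r}$ (up to the scaling convention) and commutes with $SO(2r)$, so its eigenspaces give the self-dual and anti-self-dual parts. Combined with the at-most-two bound from the previous step, this gives exactly two components.

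\textbf{Main obstacle.} The key step is the $O(n+1)$-irreducibility, or equivalently the uniqueness of highest weight vectors. I would prove it by a direct weight-vector argument: any nonzero weight vector can be carried to $e_1\wedge\cdots\wedge e_d$ by root operators.

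\textbf{Real versus complex.} In the exceptional case the Hodge star has eigenvalues $\pm1$ when $r$ is even, so the splitting is defined over $\R$. When $r$ is odd the eigenvalues are $\pm i$, and then the real module stays irreducible while its complexification splits. The statement is therefore read at the level of the complexification, as in \cite{FH}. Since the paper cites that reference, I would take the result as quoted, and give the sketch above only for completeness.
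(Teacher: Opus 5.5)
Your proposal is correct and is essentially the paper's approach. The paper offers no argument of its own; it just cites the highest-weight description in \cite[Section~19.2]{FH}, which is your framework: highest weight $L_1+\cdots+L_d$ in general, and the two highest weights $L_1+\cdots+L_{r-1}\pm L_r$ when $n+1=2r$, $d=r$. One caution: in that exceptional case the $O(n+1)$-irreducibility behind your ``at most two'' bound cannot come from $\mathfrak{so}$ root operators alone, and needs a reflection as well.

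Your real-versus-complex remark is right and is something the paper glosses over. Since the Hodge star on $\wedge^r\R^{2r}$ squares to $(-1)^r$, for $r$ odd (e.g.\ $SO(2)$ on $\R^2$) the real module is irreducible and only its complexification splits. The proposition should therefore be read after complexifying.
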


Consider the $K$-action on $\wedge^d \cWR$. Observe that in $\wedge^d \cWR$, the vector
\[
v^K = \xi_0 \xi_1  \cdots  \xi_{d-1}
\]
is $K$-fixed. Indeed, for $(k_0, k_1)\in SO(d)\times SO(n+1-d)$, we have
\[
(k_0, k_1).v^K = k_0 ( \xi_0 \xi_1  \cdots  \xi_{d-1}) = (\det k_0 )v^K= v^K.
\]
When $\wedge^d \cWR$ is irreducible, this spherical vector is unique (cf. \cite[Section~4]{Zhu25}). When $d = (n+1)/2 \in \Z_{>0}$, there is another $K$-fixed vector
\[
v_-^K =  \xi_d \cdots  \xi_n.
\]
In this case $\wedge^d \cWR$ splits exactly into two components (Proposition~\ref{prop:FH}), each containing a one-dimensional $K$-fixed subspace. We choose the component containing $\xi_0  \cdots  \xi_{d-1}$.

Consider the spherical functions $\phi$ associated with $\oGr$. They arise precisely as the matrix coefficients of irreducible representations of $G$. If an irreducible module $M$ of $G$ is also spherical with a normalized spherical vector $u^K$, then $\phi(g) = \langle u^K, g.u^K \rangle$ for any $g\in G$, cf. \cite{Helgason2}.
Explicitly, in our context, for each $d$ we may record a spherical function
\[
\phi_d(g) = ( v^K, g.v^K )
\]
where the pairing $( \cdot, \cdot )$ is extended naturally from the standard inner product on $\cWR$. %Here we take the convention  that we do not distinguish a spherical function on $G$ or on $G/K$. 

We now return to the original data $\Lambda = (q, \qt, V)$ defining $\kP\od$. In our context, $\Lambda$ encodes a basis change in the modes of the Fock space $\mathcal{F}$. Let
\begin{equation} \label{eqn:gV}
    g = g_V := q_0^{-1/2}Q^{1/2}V\Qt^{1/2}.
\end{equation}
Since $QV\Qt V\Trn = q_0 I_{n+1}$, we see that $g \in O(n+1)$. Moreover, we may choose $Q^{1/2}$ and $\Qt^{1/2}$ so that $\det g = 1$. Thus any given $V$ determines an element in $SO(n+1)$. Although it is not unique, our formulation below is independent of this choice.

Let $J$ be any index subset of size $d$. Since $G$ acts transitively on orthonormal bases for $\cWR$, we can pick $\sigma_I, \sigma_J \in G$ such that 
\[
\sigma_I.v^K =  \xi_I, \quad \sigma_J.v^K = \xi_J.
\]
It is not hard to see that the $(I,J)$-minor of $g$ is exactly given by $( \xi_I, g.\xi_J )$. Therefore, the $(I, J)$-minor of $g$ can be rewritten in terms of $\phi_d$:
\begin{align}
    (  \xi_I, g.\xi_J ) & = (  \sigma_I v^K, g\sigma_J.v^K ) \notag \\
    &= (  v^K, \sigma_I^{-1}g\sigma_J.v^K ) \notag \\
    &= \phi_d(\sigma_I^{-1}g\sigma_J). \label{eqn:gIJ}
\end{align}
If $\sigma_I'$ is another choice such that $\sigma_I' .v^K = \xi_I$, then we must have
\[
\sigma_I^{-1}\sigma_I'.v^K = v^K,
\]
meaning $\sigma_I^{-1}\sigma_I' \in K$. Similarly for $\sigma_J$. By the $K$-bi-invariance of $\phi$, 
\[
\phi_d(\sigma_I^{-1}g\sigma_J) = \phi_d(\sigma_I^{-1}\sigma_I'(\sigma_I')^{-1}g\sigma_J).
\]
%thus the choices of $\sigma_I$, and similarly of $\sigma_J$, do not matter. 
Thus $\phi_d(\sigma_I^{-1}g\sigma_J)$ is independent of the choices of $\sigma_I$ and $\sigma_J$.
Additionally, when $\bigwedge^d\cWR$ is not irreducible, this formula still works, and we may choose $v^K_-$ to write down a different one.

Combining \eqn{\ref{eqn:oddP}}, \eqn{\ref{eqn:gV}}, and \eqn{\ref{eqn:gIJ}}, we have
\begin{equation} \label{eqn:krZonal}
    \kP\od (\eps, \epst) = \frac{q_0^{d/2}}{d!}\sum_{\substack{|I|=|J|=d}} q_I^{-\frac12} \qt_J^{-\frac12}\phi_d(\sigma_I^{-1}g_V\sigma_J) \epst_I\eps_J.
\end{equation}
This is the main conclusion of this section. \eqn{\ref{eqn:krZonal}} shows that the odd Krawtchouk polynomial $\kP\od(\eps, \epst)$ has coefficients given by weighted evaluation of $\phi_d$, the spherical function, associated with $\oGr(n+1, d)$, which appears naturally when we consider the fermionic Fock space. Additionally, in view of the probability interpretation above (\eqn{\ref{eqn:prob}}), the squared norms of the evaluation of $\phi_d$ are the transition probabilities of different occupation basis dictated by $\Lambda=(q, \qt, V)$.

\bibliographystyle{amsalpha}
\bibliography{ref}

\end{document}